\definecolor{darkred}{rgb}{0.4,0,0}
\definecolor{darkgreen}{rgb}{0,0.4,0}
\definecolor{darkblue}{rgb}{0,0,0.4}
\definecolor{darkgray}{rgb}{0.55,0.55,0.55}
\definecolor{lightgray}{rgb}{0.9,0.9,0.9}
\crefname{thm}{theorem}{theorems}
\def\@cite#1#2{{\m@th\upshape\bfseries%
[{#1\if@tempswa{\m@th\upshape\mdseries, #2}\fi}]}}
\renewcommand{\H}{\mathcal{H}}
\newcommand{\M}{\mathcal{M}}
\renewcommand{\O}{\mathcal{O}}
\newcommand{\CC}{\mathbb{C}}
\newcommand{\NN}{\mathbb{N}}
\newcommand{\QQ}{\mathbb{Q}}
\newcommand{\RR}{\mathbb{R}}
\newcommand{\ZZ}{\mathbb{Z}}
\newcommand{\modelA}{\mathrm{A}}
\newcommand{\modelZ}{\mathrm{Z}}
\renewcommand{\Re}{\mathrm{Re}}
\renewcommand{\Im}{\mathrm{Im}}
\newcommand{\Aut}{\mathrm{Aut}}
\newcommand{\End}{\mathrm{End}}
\newcommand{\Aff}{\text{\normalfont Aff}}
\newcommand{\Prym}{\mathrm{Prym}}
\newcommand{\SL}{\mathrm{SL}}
\newcommand{\GL}{\mathrm{GL}}
\newcommand{\Sym}{\mathrm{Sym}}
\newcommand{\Dih}{\mathrm{Dih}}
\newcommand{\fr}{\mathrm{fr}}
\newcommand{\modn}[1]{\mathbin{\equiv_{#1}}}
\newcommand{\nmodn}[1]{\mathbin{{\not\equiv}_{#1}}}
\newtheorem*{thm*}{Theorem}
\newtheorem{thm}{Theorem}[section]
\newtheorem{lem}[thm]{Lemma}
\crefname{lem}{lemma}{lemmas}
\newtheorem{prop}[thm]{Proposition}
\crefname{prop}{proposition}{propositions}
\newtheorem{cor}[thm]{Corollary}
\theoremstyle{definition}
\theoremstyle{remark}
\newtheorem{rem}[thm]{Remark}
\g@addto@macro\bfseries{\boldmath}
\renewcommand{\subset}{\subseteq}
\begin{document}

	\title[Permutation of periodic points on Weierstrass loci]{Permutations of periodic points of Weierstrass Prym eigenforms}
	
	\author[R. Gutiérrez-Romo]{Rodolfo Gutiérrez-Romo}
	\address[Rodolfo Gutiérrez-Romo]{Centro de Modelamiento Matemático, CNRS-IRL 2807, Universidad de Chile, Beauchef 851, Santiago, Chile.}
	\email{g-r@rodol.fo}
	\urladdr{http://rodol.fo}
	
	\author[A. Pardo]{Angel Pardo}
	\address[Angel Pardo]{Departamento de Matemática y Ciencia de la Computación, Universidad de Santiago de Chile, Las Sophoras 173, Estación Central, Santiago, Chile.
}
	\email{angel.pardo@usach.cl (corresponding author)}
	
	\subjclass[2020]{14H55 (primary), and 37C85, 37D40 (secondary)} 
	
	\keywords{Veech group, translation surface, Veech surface, lattice surface, Prym involution, periodic point, permutation groups}
	
	\thanks{This work was supported by the Center for Mathematical Modeling (CMM), ACE210010 and FB210005, BASAL funds for centers of excellence from ANID-Chile, and also by ANID-Chile through the FONDECYT Regular 1221934 grant.}

	\begin{abstract}
        A Weierstrass Prym eigenform is an Abelian differential with a single zero on a Riemann surface possessing some special kinds of symmetries. Such surfaces come equipped with an involution, known as a Prym involution. They were originally discovered by McMullen and only arise in genus $2$, $3$ and $4$. Moreover, they are classified by two invariants: discriminant and spin.
 
        We study how the fixed points for the Prym involution of Weierstrass Prym eigenforms are permuted. In previous work, the authors computed the permutation group induced by affine transformations in the case of genus $2$, showing that they are dihedral groups depending only on the residue class modulo~$8$ of the discriminant $D$.
        In this work, we complete this classification by settling the case of genus $3$, showing that the permutation group induced by the affine group on the set of its three (regular) fixed points is isomorphic to $\Sym_2$ when $D$ is even and a quadratic residue modulo~$16$, and to $\Sym_3$ otherwise. The case of genus $4$ is trivial as the Pyrm involution fixes a single (regular) point.
        In both cases, these same groups arise when considering only parabolic elements of the affine group.

        By recent work of Freedman, when the Teichmüller curve induced by Weierstrass Prym eigenform is not arithmetic, the fixed points of the Prym involution coincide with the periodic points of the surface. Hence, in this case, our result also classifies how periodic points are permuted.
	\end{abstract}
	
	\maketitle
 	\setcounter{tocdepth}{1}

\section{Introduction}
    A \emph{translation surface} is a closed orientable connected surface (without boundary) that is obtained from edge identifications, by translations, of polygons on the plane, up to scissors congruences.
    The layout of a translation surface on the plane induces a natural set of charts on it whose transition maps are translations, known as a \emph{translation atlas}. By means of this atlas, the surface also inherits a (possibly singular) natural flat metric. If they exist, the singularities lie at the vertices of the polygon and possess cone angles that are multiples of $2\pi$. The space of translation surfaces is stratified by the (unordered) list of cone angles at the vertices.
    
    The group $\SL(2, \RR)$ acts naturally on each stratum by deforming the polygon; this action generalizes the action of $\SL(2, \RR)$ on the space $\GL^+(2, \RR)/\SL(2, \ZZ)$ of flat tori. Over the last decades, there has been a great deal of interest about the dynamical, algebraic and geometric properties of this action.
    We refer the reader to the excellent surveys by Zorich~\cite{Zorich:survey} and Forni--Matheus~\cite{Forni-Matheus:survey} for more details.

    If $X$ is a translation surface, its $\SL(2, \RR)$-stabilizer is called the \emph{Veech group} of $X$ and is denoted by $\SL(X)$. Such a group is never cocompact. Nevertheless, $\SL(X)$ may be a lattice (meaning that it may have finite covolume), and in these cases $X$ is said to be a \emph{Veech surface}.

    Assume now that $X$ is a Veech surface and consider the group $\Aff(X)$ of affine transformations on $X$, that is, the group of orientation-preserving diffeomorphisms whose local expressions (with respect to the translation atlas) are affine maps. In this setting, the derivative map $D\colon \Aff(X) \to \SL(X)$ is a finite-to-one group homomorphism, and it is actually a group isomorphism when the flat metric of $X$ has a single singularity.
    
    A point of $X$ with a finite $\Aff(X)$-orbit is said to be \emph{periodic}. Such points arise in important problems about billiard dynamics as, in particular, the finite-blocking and the illumination problems (see, \emph{e.g.}, \cite{Monteil,Lelievre-Monteil-Weiss}). Therefore, periodic points have been extensively studied during recent years (see, \emph{e.g.}, \cite{Apisa,Apisa:marked,Apisa-Wright}).
    Nevertheless, explicitly determining the periodic points of a concrete Veech surface $X$ has proven to be a daunting task.
    Several advances have been made for some classes of Veech surfaces (see, \emph{e.g.}, \cite{Moeller,Apisa-Saavedra-Zhang,Wright:Ward-Veech,Freedman}).
    In this context, a natural question is what (conjugacy class of) permutation group(s) is obtained from the $\Aff(X)$-action on the set of periodic points (see, e.g., the second question at the end of \cite[Section~1]{McMullen:regular_polygons}).
    This work and our previous work \cite{Gutierrez-Romo--Pardo:H2} answers this question for a particular class of Veech surfaces.

    In genus two, there are exactly two strata of translation surfaces: $\H(2)$ and $\H(1, 1)$. As of yet, a full classification of Veech surfaces in the latter stratum is unknown. On the other hand, Veech surfaces in $\H(2)$, together with their $\SL(2,\RR)$-orbits, were fully classified by McMullen (see also \cite{Calta,Hubert-Lelievre}). The classification relies on two numerical invariants: a real quadratic \emph{discriminant} $D \geq 5$, and a \emph{spin} invariant (taking values $\pm 1$) when $D \modn{8} 1$. The set of periodic points on such a surface $X$ will depend on whether $X$ is in the $\SL(2,\RR)$-orbit of a \emph{square-tiled surface} (that is, a finite cover of the unit flat torus, branched over a single point). Indeed, Möller~\cite{Moeller} showed that,
    this set coincides with the six Weierstrass points (the fixed points of the hyperelliptic involution) in the underlying Riemann surface when $X$ is not in the $\SL(2, \RR)$-orbit of a square-tiled surface. In $\H(2)$, these six points include the singularity (of cone angle $6\pi$), so there exist five regular periodic points. On the other hand, if $X$ belongs to the $\SL(2, \RR)$-orbit of a square-tiled surface, the set of periodic points is countable since it also includes all points that project to torsion points on the unit flat torus. With this in mind, the authors computed the conjugacy classes of permutation groups acting on the five regular Weierstrass points in $\H(2)$:
    
    \begin{thm}[{\cite[Theorem~1.1]{Gutierrez-Romo--Pardo:H2}}]
        Let $X$ be a Veech surface in $\H(2)$ and let $D$ be the real quadratic discriminant arising in McMullen's classification of its $\SL(2, \RR)$-orbit. Consider the permutation group $G(X)$ obtained by the action of the affine group $\Aff(X)$ on the five regular Weierstrass points $\{w_1, \dotsc, w_5\}$ of $X$. We have that the conjugacy class of $G(X)$ inside $\Sym(\{w_1, \dots, w_5\})$ is equal to the conjugacy class of
        \begin{itemize}
            \item $\Dih(\{w_1, \dotsc, w_4\})$ if $D \modn{4} 0$;
            \item $\Dih(\{w_1, \dotsc, w_5\})$ if $D \modn{8} 5$; and
            \item $\Sym(\{w_1, w_2\}) \times \Sym(\{w_3, w_4, w_5\})$ if $D \modn{8} 1$, regardless of the spin;
        \end{itemize}
        where $\Sym(A)$ is the symmetric group acting on a set $A$, and $\Dih(A)$ is the dihedral group acting on a set of vertices $A$. These groups are isomorphic to $\Dih_4$, $\Dih_5$ and $\Dih_6$, respectively, where $\Dih_n$ denotes the dihedral group of order $2n$.
        \label{thm:main_H(2)}
    \end{thm}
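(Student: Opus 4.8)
The plan is to turn McMullen's classification of the orbit into an explicit computation. Fix a non-square-tiled Veech surface $X \in \H(2)$, let $D$ be its discriminant (and $\varepsilon \in \{\pm 1\}$ its spin when $D \modn{8} 1$). By Möller's theorem the periodic points of $X$ are exactly the six Weierstrass points; one of them is the unique cone point (of angle $6\pi$), which every affine map fixes, so $\Aff(X)$ acts on the five regular Weierstrass points, giving $\rho\colon \Aff(X) \to \Sym(\{w_1,\dotsc,w_5\})$ with image $G(X)$. Since $X$ has a single singularity, $D\colon \Aff(X) \to \SL(X)$ is an isomorphism, and the hyperelliptic involution $\iota \in \Aff(X)$ (derivative $-I$) fixes every Weierstrass point; hence $\rho$ factors through $\SL(X)/\{\pm I\}$, i.e. through the orbifold fundamental group of the Teichmüller curve $W_D$. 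In particular $G(X)$, viewed as a subgroup of $\Sym_5$ up to conjugacy, is an invariant of the $\SL(2,\RR)$-orbit, so it may be computed on any convenient representative.

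For the lower bound I would work with McMullen's L-shaped prototypes: every surface in the orbit of $X$ is affinely equivalent to an L-shaped polygon $L(a,b,c,e)$ with $D = e^2 + 4bc$ and the usual normalizing inequalities, and $\Aff(X)$ acts transitively on the finite set of prototypes of given discriminant and spin via explicit prototype affine maps. On such a polygon the five regular $w_i$ are located explicitly — the centres of the two sub-rectangles and the midpoints of the horizontal and vertical sides of the relevant lengths — so their labels can be tracked. The horizontal and vertical cylinder decompositions furnish two parabolic generators (products of Dehn twists), and a twist in a cylinder containing $k \in \{1,2\}$ of the $w_i$ either fixes those points or transposes the two of them according to the parity of the twisting number relative to the cylinder circumference; there is moreover an order-$2$ (in degenerate cases order-$4$) affine symmetry of the polygon supplying further transpositions. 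The residue of $D$ modulo $8$ enters precisely here, since $e^2 \bmod 8 \in \{0,1,4\}$ and $4bc \bmod 8 \in \{0,4\}$, so the parities of $e$, $b$, $c$ — which govern exactly these twisting numbers, hence which transpositions appear — are determined by $D \bmod 8$. Composing these with the prototype moves, I expect to realize $\Dih(\{w_1,\dotsc,w_4\})$ when $D \modn{4} 0$, $\Dih(\{w_1,\dotsc,w_5\})$ when $D \modn{8} 5$, and $\Sym(\{w_1,w_2\}) \times \Sym(\{w_3,w_4,w_5\})$ when $D \modn{8} 1$, and to see the spin leaves the last answer unchanged.

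For the upper bound one uses that $\Aff(X)$ is generated by these same ingredients. McMullen's description of the orbit gives that the prototype graph — vertices the prototypes of fixed discriminant and spin, edges the prototype moves — is connected with a completely explicit edge set, so $\Aff(X)$ is generated by the automorphisms of the individual prototypes together with transport around closed loops; hence $G(X)$ is generated exactly by the permutations already computed, and identifying the subgroup of $\Sym_5$ they generate is a finite check. In particular, for $D \modn{8} 5$ one must confirm the generated transitive group is $\Dih_5$ rather than a larger transitive subgroup (such as $\mathrm{AGL}(1,5)$ or $A_5$), and in the other cases that the orbit partition $\{w_1,w_2\}\sqcup\{w_3,w_4,w_5\}$, respectively the fixed point $w_5$, is genuinely preserved. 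A cleaner packaging of the upper bound, which I would pursue in parallel, is to exhibit an $\Aff(X)$-invariant combinatorial structure on $\{w_1,\dotsc,w_5\}$ singling out the relevant pairing or fixed point directly from the flat geometry (for instance the two $w_i$ lying off the central cylinder when $D \modn{8} 1$, or the distinguished $w_5$ when $4 \mid D$), whose stabilizer matches the lower bound.

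The main obstacle is the uniform-in-$D$ bookkeeping: pinning down the Weierstrass points on every prototype and following their labels through the full list of prototype moves in each residue class of $D$ — and verifying this list is exhaustive — while separately confirming that the spin is irrelevant when $D \modn{8} 1$. The group theory at the end is routine, but proving that the exhibited generators really generate all of $G(X)$ — equivalently, producing the invariant structure above — is the delicate point, and it is exactly where McMullen's structural results on the orbit are indispensable.
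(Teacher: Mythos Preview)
This theorem is not proved in the present paper; it is quoted from the authors' earlier work. The paper does, however, prove the analogous genus-three statement (\Cref{thm:main_H(4)}) by what it calls ``a similar strategy of that in the genus~2 case'', so one can compare against that template. Your lower-bound plan matches it closely: work on reduced prototypes, compute the horizontal and vertical affine multitwists, and read off transpositions from the parity of the moduli ratios (cf.\ \Cref{lem:horizontal_vertical_H(4)} and the lemmas following it), with the residue class of $D$ governing which transpositions appear.

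The gap is in your upper bound. Approach~(a) --- that $\Aff(X)$ is generated by the twists at each prototype together with transport around closed loops in the prototype graph --- is not justified as stated. The prototype graph encodes the cusps of the Teichm\"uller curve $W_D$, but the orbifold fundamental group $\SL(X)/\{\pm I\}$ in general also has contributions from elliptic points and from the genus of $W_D$, and neither of these is visible from the cusp combinatorics alone. McMullen's connectedness result for the prototype graph is a statement about components, not a presentation of the Veech group, so ``$G(X)$ is generated exactly by the permutations already computed'' does not follow. Carrying this through would amount to computing each Veech group, which is precisely what one wants to avoid.

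The paper instead runs your approach~(b), and this is the substantive argument rather than a cleaner repackaging. In the arithmetic case the constraint is the HLK-invariant: the projections of the regular fixed points to $2$-torsion on the covered torus form an $\Aff(X)$-invariant partition (\Cref{sec:arithmetic}). In the non-arithmetic case the paper extends the same mechanism by using that the entries of any $A \in \SL(X)$ lie in the quadratic order $\O_D$ (or $\O_{D/4}$), so one can reduce the coordinates of the $w_i$ modulo a suitable sublattice and obtain a well-defined invariant that every affine map must respect (\Cref{sec:non-arithmetic}). Your geometric heuristics (``the two $w_i$ lying off the central cylinder'', ``the distinguished $w_5$'') point in the right direction but do not yet contain this algebraic mechanism, which is what makes the invariance hold for \emph{all} of $\Aff(X)$ without knowing generators.
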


    The classification of Veech surfaces in $\H(2)$ also inspires a construction of Veech surfaces of genus three or four possessing a single singularity, known as \emph{Weierstrass Prym eigenforms}. We describe this construction in \Cref{sec:background}. Such surfaces come equipped with an involution, called the \emph{Prym involution}. The Prym involution has exactly $10-2g$ fixed points, where $g$ denotes the genus, and one of them has to be the singularity. In particular, Weierstrass Prym eigenforms only exist in these three genera (that is, two, three and four).
    
    Weierstrass Prym eigenforms were first considered by McMullen~\cite{McMullen:Prym}, and then fully classified by Lanneau--Nguyen~\cite{Lanneau-Nguyen:H4,Lanneau-Nguyen:H6}.
    As in the case of genus two, they are also distinguished by two numerical invariants: a real quadratic \emph{discriminant} $D$ and, in some cases, an analogue of the spin invariant.

    The fixed points of the Prym involution are always periodic points. As such, they play a similar role as Weierstrass points in genus two. Indeed, in this latter case the Prym involution actually coincides with the hyperelliptic involution.
    Recently, Freedman~\cite{Freedman} classified the periodic points on non-arithmetic Weierstrass Prym eigenforms, showing that there are no more such points than previously expected. More precisely, he shows a result analogous to the one by Möller~\cite{Moeller} in genus two: the set of periodic points of a 
    Veech surface is precisely the set of fixed points of the Prym involution when the surface does not belong to the $\SL(2, \RR)$-orbit of a square-tiled surface. Otherwise, the points projecting to torsion points of the unit flat torus are also periodic points.
    
    Given a Weierstrass Prym eigenform $X$, we are interested in the permutation group $G(X)$ induced by the action of $\Aff(X)$ on the regular fixed points of the Prym involution. As previously stated, the case of genus two was settled by the authors~\cite{Gutierrez-Romo--Pardo:H2}, obtaining \Cref{thm:main_H(2)}. The case of genus four is trivial, since there is a single regular fixed point. Thus, it only remains to address the case of genus three. In this work, we settle this last case to classify the conjugacy class of $G(X)$:
    
    \begin{thm}\label{thm:main_H(4)}
        Let $X$ be a Weierstrass Prym eigenform of genus three and let $D$ be the real quadratic discriminant arising in Lanneau--Nguyen's classification of its $\SL(2, \RR)$-orbit. Consider the permutation group $G(X)$ obtained by the action of the affine group $\Aff(X)$ on the three regular fixed points $\{w_1, w_2, w_3\}$ of the Prym involution of $X$. We have that the conjugacy class of $G(X)$ inside $\Sym(\{w_1, w_2, w_3\})$ is equal to the conjugacy class of
        \begin{itemize}
            \item $\Sym(\{w_1, w_2\})$ if $D$ is even and a quadratic residue modulo $16$;
            \item $\Sym(\{w_1, w_2, w_3\})$ otherwise;
        \end{itemize}
        where $\Sym(A)$ is the symmetric group acting on a set $A$.
    \end{thm}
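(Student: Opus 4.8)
The plan is to analyze the action of $\Aff(X)$ on the three regular fixed points by combining two ingredients: a \emph{global} constraint coming from the structure of $\Aff(X)$ as (isomorphic to) the Veech group $\SL(X)$ — recall that for a single-zero surface the derivative map is an isomorphism — and \emph{local} computations on explicit models of the Weierstrass Prym eigenforms in each congruence class of $D$. Since $\Sym_3$ has only three transitive-or-not subgroups up to conjugacy that act faithfully with no common fixed point ($\Sym_3$ itself, the cyclic $\ZZ/3$, and a $\Sym_2$ fixing one point), and since the trivial and $\ZZ/3$ cases can presumably be excluded by exhibiting a single affine transposition (e.g. a well-chosen parabolic or hyperbolic element whose rotational part is an involution exchanging exactly two fixed points), the real content is to distinguish when $G(X)=\Sym_3$ versus $G(X)\cong\Sym_2$. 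I would first set up, following Lanneau--Nguyen's classification, concrete polygonal models (or the ``prototype'' surfaces) for Weierstrass Prym eigenforms in $\H(4)^{\mathrm{odd}}$ of discriminant $D$, with coordinates for the three regular fixed points $w_1,w_2,w_3$ of the Prym involution in terms of $D$.

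Next I would identify enough explicit elements of $\Aff(X)$ to generate $G(X)$. The standard strategy here — mirroring the genus-two argument of \Cref{thm:main_H(2)} — is to use the two (or three) parabolic directions on the prototype surface: each completely periodic direction gives a cylinder decomposition, and the affine multitwist in that direction (a parabolic element of $\SL(X)$) permutes the fixed points according to how the cylinders and their core curves carry the involution's fixed points. Computing, for each periodic direction, which $w_i$ lie on which cylinder and tracking the twist action gives generators of $G(X)$ as permutations of $\{w_1,w_2,w_3\}$. By varying $D$ over its residue classes modulo $16$, the combinatorics of these cylinder decompositions (in particular the parities of cylinder widths/heights and the relative position of the fixed points, which is exactly what a quadratic-residue condition mod $16$ detects) will change, and I expect that when $D$ is even and a QR mod $16$ one of the fixed points — say $w_3$ — is pinned to a distinguished location (a cylinder core curve fixed setwise, or an intersection point fixed by every periodic twist), forcing every generator to fix $w_3$, whereas otherwise some twist moves $w_3$ and together with a transposition one generates all of $\Sym_3$.

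Then I would prove the matching upper bound in the QR case: show that \emph{no} element of $\Aff(X)$ can move $w_3$. This is where a genuinely structural argument is needed rather than case-checking finitely many generators. I would argue that $w_3$ is canonically characterized among the three fixed points — for instance as the unique regular Weierstrass point whose image under the quotient $X/\langle\text{Prym}\rangle$ carries some invariant not shared by $w_1,w_2$, or (more in the spirit of the genus-two paper) by showing that $w_1,w_2$ are ``exchangeable'' via an affine involution while the pair $\{w_1,w_2\}$ is distinguished from $\{w_3\}$ by a conjugacy-invariant of the configuration (e.g. the set of holonomies of saddle connections or the structure of the relative homology classes). Any $\Aff(X)$-equivariant invariant separating $w_3$ from $\{w_1,w_2\}$ immediately gives $G(X)\le \Sym(\{w_1,w_2\})$, and combined with the explicit transposition from the previous step yields equality.

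The main obstacle I anticipate is precisely this upper bound in the even-QR-mod-$16$ case: producing a \emph{clean, $D$-uniform} reason that $w_3$ is fixed by the whole affine group, valid simultaneously for infinitely many surfaces, rather than an ad hoc verification. The natural candidate is a cohomological or number-theoretic invariant attached to each fixed point (its class in the Prym part of $H_1$, or the value of a certain quadratic form modulo the relevant ideal), whose vanishing/non-vanishing is governed exactly by whether $D$ is a quadratic residue modulo $16$; making this invariant manifestly $\Aff(X)$-equivariant and computing it is the crux. A secondary difficulty is bookkeeping: keeping the labeling of $w_1,w_2,w_3$ consistent across the different prototype models and across the $\SL(2,\RR)$-orbit, so that the permutations read off from different periodic directions can legitimately be composed — this is routine but error-prone, and I would handle it by fixing one prototype per orbit and expressing everything relative to it, as in \Cref{thm:main_H(2)}.
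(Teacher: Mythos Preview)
Your plan matches the paper's proof almost exactly: prototypes from Lanneau--Nguyen, lower bounds via horizontal and vertical affine multitwists on $\modelA^\pm_D(e)$, and an $\Aff(X)$-equivariant invariant to pin down $w_3$ in the even quadratic-residue case. The one place where you are vague and the paper is concrete is the identity of that invariant: it is the HLK-invariant, i.e.\ the image of each $w_i$ under the projection to the $2$-torsion of the covered torus in the arithmetic case, and in the non-arithmetic case the paper extends this by writing Veech-group entries in $\O_D$ and $\O_{D/4}$, reducing the ``rational part'' modulo $2$, and showing $w_1,w_2$ have the same reduction while $w_3$ differs---exactly the ``number-theoretic invariant modulo the relevant ideal'' you gesture at. You should also anticipate that the general prototype argument breaks for a few small discriminants ($D\in\{8,17,25\}$), which the paper handles by hand using an extra periodic direction.
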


    As in the case of genus two, the conjugacy class of $G(X)$ is in some sense a ``weak'' invariant of the $\SL(2,\RR)$-orbit of $X$: it only depends on the residue class (modulo $16$) of the discriminant.
    Furthermore, the classification does not depend on whether the Veech surface is arithmetic or not, although the proofs use slightly different methods for these two cases.
    
    \subsubsection*{The HLK-invariant}
        A Weiestrass Prym eigenform $X$ in the $\SL(2,\RR)$-orbit of a square-tiled surface is \emph{arithmetic}, that is, it corresponds to a branched cover of a flat torus (elliptic curve), branched over the $1$-torsion point.
        It follows that any regular point fixed by an involution on $X$ projects down to a $2$-torsion point on the torus.
        Moreover, any element of $\SL(2, \RR)$ preserves both the sets of $1$-torsion points and of $2$-torsion points.
        These observations give rise to an invariant for the $\SL(2, \RR)$-orbit of $X$, defined as the combinatorics of the projections of the Prym fixed points on the covered torus.
        In the hyperelliptic case, this invariant is known as the \emph{HLK-invariant}; it was first considered by Kani~\cite{Kani} and Hubert--Lelièvre~\cite{Hubert-Lelievre} (see also the work by Matheus--Möller--Yoccoz \cite{Matheus-Moeller-Yoccoz}).
        In our context, the HLK-invariant naturally restricts the action of $\Aff(X)$ on the set of regular Prym fixed points and, \Cref{thm:main_H(2),thm:main_H(4)} show that, in fact, this is the only restriction.

        In the non-arithmetic case, a suitable reduction allows us to apply the same criterion in some cases to constrain the action of the affine group on regular Prym fixed points, even if the periods are no longer integral; they belong to a real quadratic field. We show that these restrictions are, in fact, the only ones.
        
        \subsubsection*{Generators}
        In our proof, we only make use of affine Dehn multitwists, which are in correspondence with parabolic elements in the Veech group.
        As a consequence, we answer the natural follow-up question on which elements of the affine group are required to generate the group $G(X)$ in \Cref{thm:main_H(4)}.
        \begin{cor}\label{cor:main_H(4)}
        	The group $G(X)$ can be generated by the action of affine multitwists in $\Aff(X)$.
        \end{cor}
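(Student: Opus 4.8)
The plan is to extract \Cref{cor:main_H(4)} directly from the constructive half of the proof of \Cref{thm:main_H(4)}. Establishing the conjugacy class of $G(X)$ amounts to two inclusions: an \emph{upper bound}, showing that $G(X)$ is contained in (a conjugate of) the stated group --- this is where the HLK-invariant, or its real-quadratic refinement in the non-arithmetic case, enters as an obstruction --- and a \emph{lower bound}, exhibiting enough affine transformations of $X$ to realize every element of the stated group. Only the lower bound is relevant here, and the content of the corollary is the observation that the affine maps used to produce these generators can be chosen to be affine multitwists, that is, products of Dehn twists along the core curves of the cylinders in a periodic direction of $X$, whose derivatives are the parabolic (unipotent) elements of $\SL(X)$. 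Since the derivative map $D\colon\Aff(X)\to\SL(X)$ is an isomorphism (a single singularity), such a multitwist is exactly $D^{-1}$ of a parabolic element, so ``generated by affine multitwists'' is the same as ``generated by preimages of parabolics''.

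Since $\Sym(\{w_1, w_2\})$ is generated by a single transposition and $\Sym(\{w_1, w_2, w_3\})$ is generated by two transpositions (equivalently, by a transposition and a $3$-cycle), it suffices to produce, for each relevant residue class of $D$ modulo $16$, one or two periodic directions on $X$ together with multitwists supported on them whose induced permutations of the regular Prym fixed points generate $G(X)$. Concretely, I would proceed as follows. In the arithmetic case, work with the square-tiled model coming from the prototype: choose a horizontal (and, if needed, a transverse) cylinder decomposition, take the smallest power of the associated elementary multitwist that descends to an affine self-map of $X$, and track the induced permutation on $\{w_1, w_2, w_3\}$ by following the $2$-torsion points to which these fixed points project on the covered torus. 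In the non-arithmetic case, use the explicit algebraic models from Lanneau--Nguyen's classification: locate the three regular Prym fixed points as prescribed intersections of saddle connections or as centers of cylinders, choose two transverse periodic directions, and compute how the corresponding multitwists move these marked points.

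The bulk of the work --- and the step I expect to be the main obstacle --- is precisely the computation that already underlies \Cref{thm:main_H(4)}: organizing the Lanneau--Nguyen prototypes by congruence class, exhibiting a uniform supply of periodic directions across the whole family, and verifying the combinatorics of the fixed points under the chosen twists. Once this is carried out with the care to only ever invoke multitwists (never an elliptic or a general hyperbolic element), \Cref{cor:main_H(4)} follows at no extra cost: the generators exhibited for the lower bound are, by construction, images under $D^{-1}$ of parabolic elements, hence affine multitwists. In particular, the proof yields the stronger statement announced in the abstract, namely that the permutation group induced by the parabolic elements of $\Aff(X)$ alone already equals $G(X)$.
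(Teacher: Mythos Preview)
Your proposal is correct and mirrors the paper's approach exactly: the corollary is not proved separately but is a byproduct of the proof of \Cref{thm:main_H(4)}, since every lemma and proposition establishing the lower bound (\Cref{lem:horizontal_vertical_H(4)}, \Cref{prop:transposition}, \Cref{lem:D=8 mod 16_H(4),lem:D=12 mod 16_H(4),lem:D=1 mod 8_H(4)}, \Cref{prop:arith-d2_H(4)}, \Cref{cor:upper-bounds_H(4)}, and the exceptional cases in \Cref{sec:remaining}) explicitly records that the exhibited generators come from parabolic elements, i.e., affine multitwists. The only cosmetic difference is that the paper works uniformly with the horizontal and vertical twists on the reduced prototypical surfaces $\modelA^\pm_D(e)$ (plus one extra diagonal direction each for $D\in\{8,17,25\}$), rather than distinguishing the arithmetic and non-arithmetic constructions as you suggest.
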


    \subsection*{Additional results}
        We include some other results about Weierstrass Prym eigenforms in genus three, which address two important questions that have not previously been answered in the literature. However, the methods we use to deal with these questions are somewhat unrelated to those used to prove our main results. Thus, these additional results are presented as appendices and described below.
        
        In \Cref{app:cylinders}, we extend the near-complete classification of one and two-cylinder decompositions for Weierstrass Prym eigenforms in genus three by Lanneau--Nguyen \cite[Appendix~4]{Lanneau-Nguyen:H4}.
        We give an elementary proof to the fact that these decompositions can only arise in the arithmetic case, and use the HLK-invariant to show that surfaces having one and two-cylinder decompositions lie in different $\SL(2,\RR)$-orbits when $D \modn{8} 1$.
        
        In \Cref{app:representation}, we show that the representation $\SL(X) \to \Aut(H_1^{(0)})$ is never faithful for Weierstrass Prym eigenforms in genus three, where $H_1^{(0)} \subseteq H_1(X; \ZZ)$ is the \emph{holonomy-free submodule}, that is, the kernel of the holonomy map $\mathrm{hol} \colon H_1(X; \ZZ) \to \CC$ (which is invariant under the action of $\SL(X)$). The representation $\SL(X) \to \Aut(H_1(X; \ZZ))$ is referred to as the \emph{Kontsevich--Zorich representation}.

    \subsection{Strategy of the proof}
    We follow a similar strategy of that in the genus~$2$ case treated in \cite{Gutierrez-Romo--Pardo:H2}.
    Lanneau--Nguyen's classification~\cite{Lanneau-Nguyen:H4} of Weierstrass Prym eigenforms relies on a combinatorial description of cylinder cusps, called \emph{splitting prototypes}, following the pioneering ideas of McMullen's classification of Teichmüller curves in $\H(2)$.
    This classification gives rise to \emph{prototypical surfaces} on each orbit of a Veech surface. Since, for any Veech surface $X$, the groups $G(X)$ and $G(g \cdot X)$ are conjugate for every $g \in \SL(2, \RR)$, it is enough to compute $G(X)$ for prototypical surfaces.
        
    Given a prototypical surface $S$, we start by computing the action of some parabolic elements of $\SL(S)$ on its regular Prym fixed points. This shows that $G(S)$ contains at least one transposition. The case of discriminant $D=8$ has to be treated separately, as the prototypical surfaces used for the general argument do not arise in this particular case.

    When $D$ is odd, or an even quadratic nonresidue modulo~$16$, this arguments already shows that $G(S)$ equals its entire ambient symmetric group $\Sym_3$, since the action of the horizontal and vertical affine multitwists twists on $S$ produce two different transpositions.
    The cases of discriminant $D\in\{17,25\}$ are also treated separately due to other minor technical reasons.

    When $D$ is an even quadratic residue modulo~$16$, the previous is no longer true and we use the geometric restrictions imposed by the locations of the Prym fixed points, and the algebraic restrictions imposed by the coefficients of $\SL(S)$, to show that $G(S)$ contains only a transposition and the identity.
    In the arithmetic case, these restrictions are those given by the HLK-invariant. In the non-arithmetic case, we are able to even extend this invariant and obtain the desired restriction in this way.
    This concludes the proof of \Cref{thm:main_H(4)} and \Cref{cor:main_H(4)}.
    
    \subsection{Structure of the paper}
    In \Cref{sec:background}, we introduce the general background necessary to formulate and prove our results. 
    In \Cref{sec:prototypes}, we introduce splitting prototypes and prototypical surfaces. \Cref{sec:parabolic} is devoted to the study of the action of affine multitwists on regular Prym fixed points, computing their action on prototypical surfaces.
    In \Cref{sec:restrictions}, we show that in the case of even discriminants which are quadratic residue modulo~$16$, there is only one non-trivial transposition; this section is divided into several subsections. Indeed, in \Cref{sec:arithmetic} we study the arithmetic case and compute the distribution of regular Prym fixed points over $2$-torsion points in the unit torus, giving the corresponding restrictions for the action on regular Prym fixed points.
    In \Cref{sec:non-arithmetic}, we study the non-arithmetic case, extending these ideas even if there is no such a projection over the torus.
    
    For some small discriminants the general arguments are insufficient; we treat these exceptional cases separately in \Cref{sec:remaining}.
    Finally, in \Cref{sec:proof}, we summarise our results and give a proof of \Cref{thm:main_H(4)} and \Cref{cor:main_H(4)}.

    We also include two appendices. In \Cref{app:cylinders}, we complete the classification of cylinder decompositions for arithmetic Weierstrass Prym eigenforms in genus three.
    In \Cref{app:representation}, we show the failure of the Kontsevich--Zorich representation to be faithful for Weierstrass Prym eigenforms in genus three.

\section{Background}
\label{sec:background}
\subsection*{Translation surfaces}
    A \emph{translation surface} is an orientable closed connected surface that can be obtained by edge-to-edge gluing of polygons in $\CC$ using translations only.
    There is a one-to-one correspondence between translation surfaces and nonzero holomorphic $1$-forms on Riemann surfaces. For more details, we refer the reader to the surfaces by Zorich~\cite{Zorich:survey} and Forni--Matheus~\cite{Forni-Matheus:survey}.

    A translation surface naturally carries a flat metric inherited from $\CC$, with conical singularities with cone angles that are multiples of $2\pi$, and located at the vertices of the polygons. Equivalently, a conical singularity with cone angle $2\pi(k+1)$ corresponds to a zero of order $k$ of the Abelian differential.

    These surfaces can be packaged together into a moduli space. The topology is given by deforming sides of the polygons, or equivalently by period coordinates \cite[Section~3]{Zorich:survey}. This space is stratified by the unordered list of orders of zeros: the surfaces in the stratum \emph{stratum} $\H(\kappa_1, \dotsc, \kappa_n)$ are those with prescribed orders $\kappa_1, \dotsc, \kappa_n$ (counting multiplicities). We can recover the genus from the combinatorics by the relation $\kappa_1 + \dotsb + \kappa_n = 2g - 2$.

    The group $\SL(2, \RR)$ (and, more generally, $\GL^+(2, \RR)$) acts naturally on $\CC$ by the identification $\CC \cong \RR^2$, and this action is carried to the moduli space of translation surfaces. Equivalently, it acts by post-composition with coordinate charts of the Riemann surface. This action preserves both the area of a surface and the stratum in which it lies.

    Let $X$ be a translation surface and let $\SL(X)$ be the stabiliser of the $\SL(2, \RR)$-action on $X$. For a ``generic'' surface, the group $\SL(X)$ is either trivial or equal to $\pm \mathrm{Id}$; the latter case arises if the stratum consists only of hyperelliptic surfaces. Nevertheless, for some very special surfaces the group $\SL(X)$ is ``large'': a lattice in $\SL(2, \RR)$ in the sense that it has finite covolume. Such surfaces are known as \emph{lattice surfaces} or \emph{Veech surfaces}. Moreover, in unpublished work, Smillie showed that Veech surfaces are exactly those with closed $\SL(2, \RR)$-orbits.    
    
    By forgetting the differentials, we can project the $\SL(2, \RR)$-orbit of a Veech surface into the moduli space of Riemann surfaces. The resulting space is a (complex) Teichmüller geodesic and is hence known as a \emph{Teichmüller curve}. There are two kinds of such curves:
    \begin{itemize}
        \item the \emph{arithmetic} ones, commensurable to the modular curve, and arising from orbits of square-tiled surfaces (those that are covers of the unit torus, branched over a single point); and
        \item the \emph{non-arithmetic} ones, which do not arise from square-tiled surfaces.
    \end{itemize}

    All these properties of a surface remain the same when a matrix in $\GL^+(2, \RR)$ is applied. Indeed, the only difference between $\SL(2, \RR)$ and $\GL^+(2, \RR)$ is the inclusion of homotheties, which do not change whether a surface is Veech, arithmetic, etc. 
    
\subsection*{Affine transformations}
    An \emph{affine transformation} on a translation surface is an orientation-preserving self-diffeomorphism whose local expressions (on the natural ``flat'' charts) are affine maps of $\RR^2 \cong \CC$. These maps form a group denoted by $\Aff(X)$.

    Since $f \in \Aff(X)$ is affine, it can be described in any flat chart by the composition of a matrix in $Df \in \SL(2, \RR)$ and a translation. Since all edge identification on a translation surfaces are by translations, the matrix $Df$ does not depend on the flat chart. Hence, we obtain a group homomorphism $D \colon \Aff(X) \to \SL(2,\RR)$, known as the \emph{derivative map}. We have that $D \Aff(X) = \SL(X)$, where we recall that $\SL(X)$ is the Veech group of $X$. The kernel of this homomorphism is finite, and it is known as the \emph{automorphism group} of $X$, that is, the self-diffeomorphisms of $X$ whose local expressions are translations. In this way, we obtain a short exact sequence
    \[
        1 \to \Aut(X) \to \Aff(X) \to \SL(X) \to 1.
    \]
    
    We will shortly restrict our analysis to particular cases of Veech surfaces, all possessing a single zero (that is, belonging to a \emph{minimal stratum}). It is well-known that the automorphism group of any Veech surface belonging to a minimal stratum is trivial (see, for example, \cite[Proposition~4.4]{Hubert-Lelievre}), so the short exact sequence above gives rise to a natural isomorphism $\Aff(X) \cong \SL(X)$. We will tacitly use this identification in the remainder of the article.

\subsection*{Periodic points} The $\Aff(X)$-orbit of a point $x \in X$ may be finite or infinite. In the former case, we say that $x$ is a \emph{periodic point} of $X$; these points are arise in many important questions about billiard dynamics, such as the illumination or finite-blocking problems (see, for example, \cite{Monteil, Lelievre-Monteil-Weiss}). The goal of this work is to classify the $\Aff(X)$-orbits of periodic points for a class of Veech surfaces.
	
	\subsection*{Prym varieties and eigenforms}
	\label{sec:preliminaires_H(4)}
    Fix a Riemann surface $X$. Recall that $H^{1,0}(X)$ is the complex vector space of Abelian differentials on $X$. As a real vector space, $H^{1,0}(X)$ is canonically isomorphic to $H^1(X; \RR)$ via the isomorphism $\omega \mapsto \Re(\omega)$. This isomorphism also endows $H^{1,0}(X)$ with a symplectic form by pulling back the intersection form on $H^1(X; \RR)$.
    
    Suppose that $\iota\colon X \to X$ is a holomorphic involution with $\iota^* \omega = -\omega$. Then, the \emph{Prym variety} of $(X,\iota)$ is the subtorus of the Jacobian of $X$ determined by the anti-invariant eigenspace of $\iota$:
    \[
        \Prym(X,\iota) = (H^{1,0}(X)^-)^*/(H_1(X; \ZZ)^-).
    \]
    
    An Abelian variety $A$ admits \emph{real multiplication} by $\O_D$ (the real quadratic order of discriminant $D > 0$) if $\dim_\CC A = 2$ and $\O_D$ occurs as an indivisible, self-adjoint subring of $\End(A)$.
    
    Now suppose that $P = \Prym(X,\iota)$ is a Prym variety with real multiplication by $\O_D$.
    Then $\O_D$ also acts on $H^{1,0}(X)^-$ and we say that $\omega \in H^{1,0}(X)^-$ is a \emph{Prym eigenform} (for real multiplication by $\O_D$) if $\O_D \cdot \omega \subseteq \CC \cdot \omega$ (and $\omega \neq 0$).
    We say that $\iota$ is the \emph{Prym involution}.
    
    Let $E_D^g \subset \Omega\M_g$ denote the space of all genus-$g$ Prym eigenforms for real multiplication by $\O_D$.
    By the seminal work of McMullen~\cite{McMullen:Prym}, $E_D^g$ is a closed, $\GL^+(2,\RR)$-invariant subset of $\Omega\M_g$.
    
    Note that $2 = \dim_\CC \Prym(X,\iota) \leq g$ and, by Riemann--Hurwitz formula, $\iota$ has exactly $10-2g$ fixed points. Thus, we have that $2 \leq g \leq 5$.
    
    In this work, we focus our attention on \emph{Weierstrass Prym eigenform loci}, that is, loci of Prym eigenforms with a single zero. We define $E_D(2g-2) = E_D^g \cap \H(2g-2)$.
    By a dimension count, $E_D(2g-2)$ consists entirely of Veech surfaces (see McMullen's work \cite[Theorem~1.2]{McMullen:Prym} and \Cref{thm:Prym-classification} below).

    The set of regular points fixed by the Prym involution for a Weierstrass Prym eigenform is fixed by the $\Aff(X)$-action and, thus, they are in particular periodic points.
    In the non-arithmetic case, they are, in fact, the only ones, as showed by Möller~\cite{Moeller} in the genus~$2$ case and, by Freedman~\cite{Freedman}, in genus~$3$ and $4$.
    
    We are interested in the action of $\Aff(X)$ on the set of regular fixed points for the Prym involution.
    
    Weierstrass Prym eigenforms in genus two correspond exactly to the Veech surfaces in $\H(2)$, the Prym involution corresponds to the hyperelliptic involution and its fixed points are the Weierstrass points of the underlying Riemann surface.
    In general, for a Prym eigenform in $E_D(2g-2)$, the only zero of order $2g-2$ is necessarily fixed by the corresponding Prym involution.
    Thus, in higher genera, we have the following:
    \begin{itemize}
        \item in genus three, Prym eigenforms in $E_D(4)$ have exactly three fixed regular points for the Prym involution;
        \item in genus four, Prym eigenforms in $E_D(6)$ have only one fixed regular point for the Prym involution; and
        \item in genus five, $E_D(8) = \emptyset$.
    \end{itemize}
    We conclude that the action of the Veech group on the regular fixed points of the Prym involution can be non-trivial only in genus two and three. The genus-two case was already established by the authors in \cite{Gutierrez-Romo--Pardo:H2}. Thus, we focus our attention to Weierstrass Prym eigenform loci in genus three. To this end, we will make use of the classification of Weierstrass Prym eigenforms by Lanneau--Nguyen~\cite{Lanneau-Nguyen:H4}:
	\begin{thm}[{\cite[Theorem~1.1]{Lanneau-Nguyen:H4}}]
	\label{thm:Prym-classification}
        For $D \geq 17$, the locus $E_D(4)$ is non-empty if and only if $D \modn{8} 1$ or $D \modn{4} 0$. Moreover, $E_D(4)$ has exactly two connected components for $D \modn{8} 1$, and it is connected for $D \modn{4} 0$.
        
        For $D < 17$, the locus $E_D(4)$ is non-empty if and only if $D \in \{8, 12\}$ and, in these cases, it is connected.
        
        Finally, each component of $E_D(4)$ is a closed $\GL^+(2,\RR)$-orbit. Moreover, the loci $E_D(4)$ are pairwise disjoint for distinct $D$. 
	\end{thm}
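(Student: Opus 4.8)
Since this is the Lanneau--Nguyen classification, I would follow the template McMullen established for Veech surfaces in $\H(2)$, adapted to the presence of the Prym involution.

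\emph{Step 1: Splitting directions and prototypes.} Every surface $(X,\omega) \in E_D(4)$ is a Veech surface, hence by the Veech dichotomy completely periodic: in any direction containing a closed geodesic, the surface decomposes into cylinders. The first task is to produce a periodic direction that is \emph{compatible} with the Prym involution $\iota$ --- one in which $\iota$ either fixes each cylinder or swaps cylinders in pairs --- and in which the decomposition has a prescribed, small number of cylinders. Using $\GL^+(2,\RR)$ one normalizes such a decomposition; the combinatorics together with the widths, heights and twists of the cylinders, and the generator of the real-multiplication order acting on $H^{1,0}(X)^-$, are then recorded by a \emph{splitting prototype}: a tuple of integers $(w,h,t,e)$ subject to arithmetic constraints ($w,h>0$, $0\le t<\gcd(w,h)$, $e$ in an explicit interval, $\gcd(w,h,t,e)=1$). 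The main structural point here is that the resulting assignment (prototype) $\mapsto$ (connected component of $E_D(4)$) is surjective, and that the discriminant is given by an explicit quadratic relation in the prototype data, of the shape $D = e^2 + 8wh$ (precise normalization aside).

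\emph{Step 2: Which discriminants occur.} Given the relation $D = e^2 + 8wh$ together with the inequality constraints, non-emptiness of $E_D(4)$ becomes the arithmetic question of whether $D$ is represented in this form by an admissible tuple. A congruence analysis (using $e^2 \equiv 0$ or $1 \pmod 8$) forces $D \modn{8} 1$ or $D \modn{4} 0$; conversely one exhibits an explicit admissible prototype for each such $D \geq 17$, and checks by hand the finitely many small cases, leaving only $D \in \{8,12\}$ below $17$.

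\emph{Step 3: Counting components and orbit structure.} To count connected components, I would introduce a renormalization move (the ``butterfly move'') that changes the chosen splitting direction and hence carries one prototype to another lying in the same $\GL^+(2,\RR)$-orbit. One then shows that the graph on admissible prototypes of a fixed $D$ generated by these moves is connected when $D \modn{4} 0$, and has exactly two components when $D \modn{8} 1$, the two being separated by a $\ZZ/2$-valued spin-type invariant (a parity of a combination of the prototype entries, verified to be move-invariant). Since McMullen's work \cite{McMullen:Prym} already gives that $E_D^g$ is closed and $\GL^+(2,\RR)$-invariant, and a dimension count shows $\dim_\CC E_D(4) = 2$, each connected component is a closed, $2$-dimensional, $\GL^+(2,\RR)$-invariant set, hence a single closed $\GL^+(2,\RR)$-orbit (a Teichmüller curve). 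Finally, the loci $E_D(4)$ for distinct $D$ are disjoint because $D$ is recovered from the isomorphism type of the order $\O_D \subseteq \End(\Prym(X,\iota))$, which is an invariant of the surface. The combinatorial core --- proving that every surface in $E_D(4)$ admits a Prym-compatible splitting into the prescribed few cylinders, and that the butterfly moves act transitively on prototypes of fixed discriminant and fixed spin --- is the crux; pinning down the exceptional list $\{8,12\}$ and the exact prototype inequalities also requires a careful finite analysis.
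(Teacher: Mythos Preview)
The paper does not prove this statement: it is quoted verbatim as \cite[Theorem~1.1]{Lanneau-Nguyen:H4} and used as a black box, with no argument supplied. So there is no ``paper's own proof'' to compare against.

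That said, your outline is a fair high-level summary of the Lanneau--Nguyen strategy, and it is consistent with the fragments of that machinery the present paper does invoke (splitting prototypes with $D = e^2 + 8bc$ in \S3, butterfly moves in Remark~4.3, the spin-type parity in Lemma~3.1). One point to sharpen: your Step~1 implicitly assumes every surface in $E_D(4)$ admits a three-cylinder splitting of the prototype shape, but the paper explicitly records (Theorem~3.1, citing \cite[Proposition~4.7]{Lanneau-Nguyen:H4}) that this fails for $D=8$, where no model-$\mathrm{A}^\pm$ decomposition exists and a separate model must be used. So the ``careful finite analysis'' you allude to at the end is not just about inequalities but about genuinely different cylinder combinatorics for small $D$.
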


\section{Prototypical representatives of Weierstrass Prym eigenforms in genus three}
\label{sec:prototypes}
    Following Lanneau--Nguyen \cite{Lanneau-Nguyen:H4}, we say that a quadruple of integers $(a,b,c,e)$ is a \emph{splitting prototype} of discriminant $D$ if:
    \begin{gather*}
        D = e^2 + 8 bc, \quad
        0 \leq a < \gcd(b,c), \quad
    	2c + e < b, \\
    	0 < b,c,
    	\quad \text{ and } \quad
    	\gcd(a,b,c,e) = 1.
    \end{gather*}

    With each splitting prototype $(a, b, c, e)$, we associate two translation surfaces, depicted in \Cref{fig:modelA+-}, where $\lambda = \frac{e + \sqrt{D}}{2}$.
    We call such surfaces \emph{prototypical surfaces} associated with the splitting prototype $(a, b, c, e)$ and denote them by $\modelA^+(a,b,c,d)$ and $\modelA^-(a,b,c,e)$; we refer to them as \emph{model $\modelA^+$} and \emph{model $\modelA^-$}, respectively. The work of Lanneau--Nguyen \cite{Lanneau-Nguyen:H4} shows that these surfaces belong to $E_D(4)$.
    
    \begin{figure}[t!]
        \centering
        \raisebox{-0.5\height}{\includegraphics[scale=0.8]{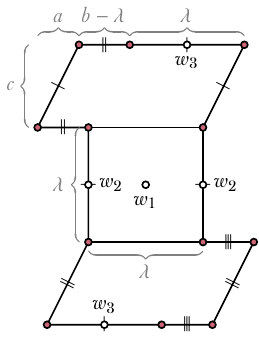}}
        \qquad\qquad
        \raisebox{-0.5\height}{\includegraphics[scale=0.8]{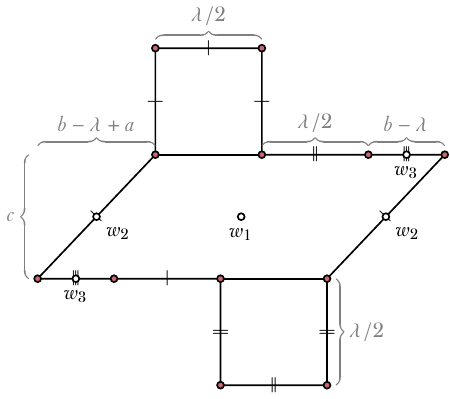}}
        \caption{Prototypical surfaces $\modelA^+(a, b, c, e)$ (left) and $\modelA^-(a, b, c, e)$ (right), with $\lambda = \frac{e + \sqrt{D}}{2}$. These examples depict $\modelA^+(1, 4, 2, -3)$ for $D = 73$, and $\modelA^-(1, 7, 3, -3)$ for $D = 177$, respectively.}
        \label{fig:modelA+-}
    \end{figure}

    Splitting prototypes correspond to some three-cylinder cusps in $E_D(4)$~\cite[Section~4.1]{Lanneau-Nguyen:H4} (see \Cref{sec:parabolic-action} for more details on cylinder decompositions).
    They do not account for all cusps. However, for each splitting prototype $(a,b,c,e)$ of discriminant $D$, the models $\modelA^+$ and $\modelA^-$ can be identified with distinct cusp of $E_D(4)$. In most cases this is enough to prove \Cref{thm:main_H(4)}. In fact, we have the following.
    \begin{thm}[{\cite[Proposition~4.7]{Lanneau-Nguyen:H4}}]
    \label{thm:no-modelA:D=8}
        Let $X \in E_D(4)$. If $X$ admits no cylinder decompositions in model $\modelA^+$ or model $\modelA^-$, then $D = 8$.
    \end{thm}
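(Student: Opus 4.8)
Since $E_D(4)$ is a union of closed $\GL^+(2,\RR)$-orbits (\Cref{thm:Prym-classification}), every $X \in E_D(4)$ is a Veech surface, hence has infinitely many completely periodic directions, namely the cusps of its lattice Veech group. After acting by $\GL^+(2,\RR)$ we may assume the horizontal direction is one of them, realising $X$ as a union of horizontal cylinders $C_1, \dots, C_k$ glued along horizontal saddle connections based at the unique zero $p$ (of order $4$). The plan is to enumerate the finitely many combinatorial types (separatrix diagrams) of such decompositions that are compatible with $X$ being a Prym eigenform, to recognise the model-$\modelA$ type as producing exactly the prototypical surfaces $\modelA^\pm(a,b,c,e)$, and to show that a surface carrying any other type either carries a model-$\modelA$ decomposition in some other periodic direction or forces $D = 8$. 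This proves the contrapositive of the claim, since $X$ admits a cylinder decomposition in model $\modelA^\pm$ exactly when some periodic direction turns $X$, after a suitable element of $\GL^+(2,\RR)$, into a prototypical surface.

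First I would record the constraints coming from the Prym involution $\iota$. As $\iota^*\omega = -\omega$, $\iota$ is an affine involution with derivative $-\mathrm{Id}$, so it preserves the horizontal direction, permutes the $C_i$ reversing core-curve orientations, and fixes $p$; moreover the $\iota$-anti-invariant homology $H_1(X;\ZZ)^-$ has rank $4$ and carries the $\O_D$-action. A cylinder fixed as a set by $\iota$ is sent to itself by a half-turn, contributing either zero or two regular fixed points at mid-height, while an $\iota$-fixed horizontal saddle connection contributes its midpoint; these, together with $p$, must account for all four fixed points of $\iota$. Combined with the bound $k \le 3$ for a horizontal decomposition of a surface in $\H(4)$ and with the combinatorics of the five horizontal separatrices at $p$, this cuts the admissible separatrix diagrams down to a short explicit list, which I would enumerate following Lanneau--Nguyen.

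For each admissible diagram one parametrises the surfaces carrying it by the cylinder widths, heights and twist parameters, and imposes that $X$ be a Prym eigenform for $\O_D$, i.e.\ that $\O_D$ act properly and self-adjointly on $H_1(X;\ZZ)^-$ fixing the line spanned by $\omega$. Solving the resulting system is the crux. For the model-$\modelA$ diagram it normalises exactly to the families $\modelA^\pm(a,b,c,e)$ with $(a,b,c,e)$ a splitting prototype of discriminant $D$, and an elementary check --- exhibiting, for instance, the prototype $(0,(D-4)/8,1,-2)$ when $D \modn{8} 4$, and analogous explicit prototypes in the remaining congruence classes --- shows that the splitting-prototype inequalities are solvable precisely when $D = 12$ or $D \ge 17$ with $D \modn{8} 1$ or $D \modn{4} 0$; by \Cref{thm:Prym-classification} this is every discriminant with $E_D(4) \ne \emptyset$ except $D = 8$. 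For each of the remaining diagrams one then checks that either the real-multiplication system has no solution inside $\H(4)$, or that its solutions, after applying an explicit parabolic affine multitwist in the Veech group, also exhibit a horizontal model-$\modelA$ decomposition in a rotated direction --- the only diagram escaping this dichotomy being the one realised by the surface generating $E_8(4)$.

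The main obstacle is exactly this last case distinction: enumerating the diagrams is bookkeeping, but for every non-$\modelA$ diagram one must either rule it out arithmetically from the $\O_D$-module relations, or exhibit a concrete parabolic element witnessing a change of periodic direction that lands in model~$\modelA$, all while tracking the $\iota$-action on homology and the integrality of the $\O_D$-structure through those direction changes. The conclusion is that the obstruction is concentrated at the single discriminant $D = 8$, matching the one exceptional value in \Cref{thm:Prym-classification}.
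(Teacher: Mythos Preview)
The paper does not prove this statement at all: it is quoted verbatim from Lanneau--Nguyen \cite[Proposition~4.7]{Lanneau-Nguyen:H4} and used as a black box, so there is no ``paper's own proof'' to compare against. Your outline is, in broad strokes, the strategy Lanneau--Nguyen actually follow --- enumerate the $\iota$-compatible separatrix diagrams in $\H(4)$, parametrise each by prototype data, and show that every non-model-$\modelA$ diagram either collapses arithmetically or can be moved to model~$\modelA$ by changing periodic direction, with $D=8$ the lone survivor.

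That said, what you have written is a plan, not a proof. Every step where the content lives is deferred: ``I would enumerate following Lanneau--Nguyen'', ``one then checks that either \ldots''. The enumeration of admissible diagrams (their models $\modelA^\pm$, $\mathrm{B}$, $\mathrm{C}$, $\mathrm{D}$) and, more importantly, the direction-change arguments that reduce each non-$\modelA$ model to an $\modelA$ one are the entire difficulty, and you have not carried any of them out. In particular, your claim that ``applying an explicit parabolic affine multitwist'' takes a non-$\modelA$ decomposition to an $\modelA$ one in another direction is not how the argument goes in general: one needs to exhibit a second periodic direction and analyse its cylinder combinatorics directly, and this is a genuine case-by-case computation (see \cite[\S4]{Lanneau-Nguyen:H4}). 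So the approach is right, but the proposal as written would not be accepted as a proof --- it is a correct table of contents for one.
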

    
    \subsection{Reduced prototypes}
    A \emph{reduced} prototypes is a splitting prototype $(a,b,c,d)$ with $c = 1$. We will work exclusively with reduced prototypes. Note that these splitting prototypes necessarily have the form
    \[
        (a,b,c,e) = \left(0,\frac{D-e^2}{8},1,e\right)
    \]
    and are therefore parameterized by
    \[
        S_D = \{e \in \ZZ \ \mathbin{|}\  e^2 \modn{8} D, e^2 < D, (e+4)^2 < D\}.
    \]
    
    The associated prototypical surfaces are denoted $\modelA^+_D(e)$ and $\modelA^-_D(e)$. That is,
    \[
        \modelA^\pm_D(e) = \modelA^\pm\left(0,\frac{D-e^2}{8},1,e\right) \in E_D(4).
    \]

    \subsection{Disconnected loci}
    When $E_D(4)$ is disconnected, there is a parity invariant telling apart the two components.
    Even if it is possible to obtain a closed expression for this invariant, this is outside of the scope of our work (and there is no direct reference known by the authors).
    Indeed, we only need to tell some particular cases apart for our purposes. More precisely, we use the following:
    \begin{lem}\label{lem:spin-Prym}
        Let $D$ be an odd discriminant such that $E_D(4)$ is non-empty (and, thus, disconnected). Then,

        \begin{itemize}
            \item If $e \in S_D$, then $\modelA^+_D(e)$ and $\modelA^-_D(e)$ lie at different components of $E_D(4)$.
            \item If $e,e' \in S_D$, then $\modelA^+_D(e)$ and $\modelA^+_D(e')$ (resp.\ $\modelA^-_D(e)$ and $\modelA^-_D(e')$) lie at the same component of $E_D(4)$ if and only if $e \modn{4} e'$.
        \end{itemize}
        That is, $\modelA^+_D(e)$ and $\modelA^-_D(e')$ lie at the same component of $E_D(4)$ if and only if $e \modn{4} -e'$.
    \end{lem}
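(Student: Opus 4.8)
The plan is to replace the relation ``lie in the same component of $E_D(4)$'' by ``lie in the same $\GL^+(2,\RR)$-orbit''; these coincide by \Cref{thm:Prym-classification}, which states that each component of $E_D(4)$ is a single closed $\GL^+(2,\RR)$-orbit, and under our hypotheses there are exactly two of them. Since $D$ is odd and $e^2\modn{8}D$ for every $e\in S_D$, all elements of $S_D$ are odd. I would split the proof into a constructive part --- exhibiting explicit $\GL^+(2,\RR)$-equivalences (cut-and-paste rearrangements of the polygons of \Cref{fig:modelA+-}) that realize all the claimed ``same component'' identifications --- and a counting part, in which the fact that there are precisely two orbits rules out any further identification.

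For the constructive part I would use the prototype moves of Lanneau--Nguyen \cite{Lanneau-Nguyen:H4}: each such move is a $\GL^+(2,\RR)$-equivalence relating the prototypical surface of one splitting prototype to that of another. Reading these moves off the polygon presentations, I expect to isolate two effects on a reduced prototype $\bigl(0,(D-e^2)/8,1,e\bigr)$. First, a move (or a short composition of moves, passing if necessary through an auxiliary non-reduced prototype and back) that preserves the model and replaces $e$ by $e+4$; iterating it yields $\modelA^+_D(e)\sim\modelA^+_D(e')$ and $\modelA^-_D(e)\sim\modelA^-_D(e')$ whenever $e\modn{4}e'$. Second, a move interchanging the two models and sending $e$ to an integer congruent to $-e$ modulo $4$; combined with the first, it yields $\modelA^+_D(e)\sim\modelA^-_D(e')$ whenever $e\modn{4}-e'$. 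Since $e$ and $e'$ are odd, the congruences $e\modn{4}e'$ and $e\modn{4}-e'$ are mutually exclusive, so these moves partition the reduced prototypical surfaces into \emph{at most} two $\GL^+(2,\RR)$-equivalence classes: one containing $\{\modelA^+_D(e):e\modn{4}1\}\cup\{\modelA^-_D(e):e\modn{4}3\}$, and the other the remaining surfaces.

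What remains --- and the step I expect to be the genuine obstacle, since no closed form for the distinguishing parity is available --- is to show that these two classes do not coincide. The route I would try first is to construct a single, easily computable $\ZZ/2$-valued invariant of the $\GL^+(2,\RR)$-orbit, for instance built from the mod-$2$ reduction of a symplectic basis of $H_1(X;\ZZ)^-$ adapted to the horizontal cylinder decomposition, in the spirit of McMullen's spin invariant for $\H(2)$, and then to verify by one direct computation --- say on $\modelA^+_D(e)$ and $\modelA^-_D(e)$ for a small $D$ with $D\modn{8}1$ --- that it takes different values; propagating along the moves of the previous paragraph separates the two classes into the two orbits for every admissible $D$. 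A variant is to read the separation off Lanneau--Nguyen's prototype-move graph, which is known to have two components, after checking that reduced prototypes meet both. Either way the two classes are exactly the two components, giving the ``only if'' halves of the first two bullet points; the displayed equivalence then follows formally, since with exactly two orbits the first bullet point makes $\modelA^+_D(e)\sim\modelA^-_D(e')$ equivalent to $\modelA^+_D(e)\not\sim\modelA^+_D(e')$, i.e.\ to $e\nmodn{4}e'$, i.e.\ to $e\modn{4}-e'$.
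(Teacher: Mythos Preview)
Your proposal is correct and follows the same route the paper takes by citation: the paper derives the first bullet directly from Lanneau--Nguyen \cite[Theorem~6.1]{Lanneau-Nguyen:H4} and states that the second is analogous to McMullen's spin computation \cite[Theorem~5.3]{McMullen:discriminant_spin} (cf.\ \cite[Lemma~6.2]{Lanneau-Nguyen:H4}), which is exactly your prototype-moves-plus-$\ZZ/2$-invariant strategy; the model-switching move you anticipate is the butterfly move $B_\infty\colon \modelA^\pm_D(e)\mapsto\modelA^\mp_D(-e-4)$ alluded to in \Cref{rem:butterfly_H(4)}. One small clarification: since butterfly moves preserve $D$, checking the invariant on a single small discriminant does not by itself separate the classes for every $D$ --- what you actually need (and what McMullen's argument provides) is that the invariant is given by a formula uniform in $D$, depending only on the model and on $e\bmod 4$.
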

    
    \begin{proof}
        The first claim follows directly from Lanneau--Nguyen \cite[Theorem~6.1]{Lanneau-Nguyen:H4}. 
        The proof of the second claim is analogous to McMullen's work in genus two \cite[Theorem~5.3]{McMullen:discriminant_spin} (cf.\ Lanneau--Nguyen \cite[Lemma~6.2]{Lanneau-Nguyen:H4}).
    \end{proof}
    
    \subsection{Re-scaled prototypical surfaces}
    We also use, in \Cref{sec:restrictions}, a re-scaled version of the surface associated with a reduced prototype. This surface is obtained by the action of the diagonal matrix $\left(\begin{smallmatrix}2\lambda^{-1} & 0 \\ 0 & 1 \end{smallmatrix}\right) \in \GL^+(2, \RR)$ on $\modelA^-_D(e)$, with $e \in S_D$, as in \Cref{fig:Z_H(4)}. We denote such surface by $\modelZ_D(e) \in E_D(4)$.
    
    \begin{figure}[t!]
        \centering
        \includegraphics[scale=0.8]{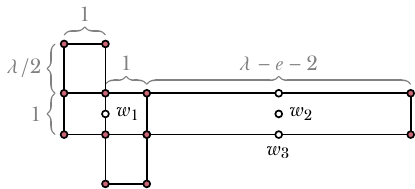}
        \caption{Re-scaled prototypical surface $\modelZ_D(e)$ obtained by applying $\left(\begin{smallmatrix} 2\lambda^{-1} & 0 \\ 0 & 1 \end{smallmatrix}\right)$ to $\modelA^-_D(e)$. This example shows $\modelZ_{116}(-6)$.}
        \label{fig:Z_H(4)}
    \end{figure}

    \section{Action of affine multitwists}
    \label{sec:parabolic}
 
    In this section, we study the permutation group induced by the action of affine multitwists that correspond to \emph{parabolic} elements of the Veech group.
    More precisely, given a reduced prototype $e \in S_D$ we show that the horizontal and vertical twists on the associated prototypical surfaces $\modelA^{\pm}(e)$ induce either trivial elements or transpositions in $G(\modelA^{\pm}(e))$. Then we compute the group generated by these affine multitwists.

    \subsection{Parabolic elements and cylinder decompositions}
    \label{sec:parabolic-action}

    In a translation surface $(X, \omega)$, every closed regular geodesic is accompanied by a collection of parallel closed regular geodesics. A cylinder on such a surface is defined as a maximal open annulus consisting of isotopic simple closed regular geodesics. Specifically, a cylinder $C$ is isometric to the product of an open interval and a circle. For a cylinder isometric to $\RR / w \ZZ \times (0, h)$, we refer to $w > 0$ as its \emph{circumference} and $h > 0$ as its \emph{height}. The \emph{modulus} of the cylinder is given by $m = \frac{w}{h} > 0$, and the \emph{core curve} is the simple closed geodesic that projects to the center of the interval. A \emph{saddle connection} is a geodesic segment connecting two singularities (possibly the same), with no singularities in its interior. The boundary of a cylinder is composed of parallel saddle connections.
    
    A \emph{cylinder decomposition} is a collection of parallel cylinders whose closures cover the entire surface, and we say that the decomposition is \emph{in direction $\theta$} if the angle between the horizontal direction and the core curve of any cylinder in the decomposition is $\theta \in [0, 2\pi)$.
    
    According to Veech's work~\cite{Veech}, there is a close relationship between cylinder decompositions and parabolic elements. Specifically, let $\theta \in [0, 2\pi)$ be a fixed angle, and let $R_\theta$ denote the matrix that rotates the plane counterclockwise by $\theta$. For $t \in \RR$, let $T_t = \left(\begin{smallmatrix} 1 & t \\ 0 & 1 \end{smallmatrix}\right)$ be the matrix that twists the horizontal direction by $t$.
    In this context, a parabolic matrix written as a product $P_\theta^t = R_\theta T_t R_\theta^{-1}$ belongs to the Veech group $\SL(X)$ if and only if there exists a cylinder decomposition $\{C_i\}_{i \in I}$ of $X$ in direction $\theta$ satisfying that the moduli $m_i$ of the cylinders $C_i$ are rationally related: that is $r_{ij} = \frac{m_i}{m_j} \in \QQ$ for all $i, j \in I$. In particular, for any $t \in \RR$ that is an integer multiple of the modulus of each cylinder in the decomposition, that is, such that for each $i \in I$ there exists $k_i \in \ZZ$ with $t = k_i m_i$, then $P_\theta^t \in \SL(X)$.
    In this case, the corresponding affine transformation preserves the cylinder decomposition by twisting each cylinder $C_i$ exactly $k_i$ times along itself. In particular, $P_\theta^t$ acts on each core curve $\gamma_i$ by $k_i$ half-turns, that is, by a rotation of angle $k_i \pi$. The $k_i$ can be chosen to be all positive and minimal, and in this case the corresponding affine transformation is referred to as the \emph{twist} in direction $\theta$, and we say that $\theta$ is a \emph{parabolic direction}.

    Thanks to the \emph{Veech dichotomy}~\cite{Veech}, parabolic directions are well understood in the case of Veech surfaces. In fact, a direction $\theta \in [0, 2\pi)$ is parabolic if and only if there is either a saddle connection or a closed geodesic in direction $\theta$.

    \subsection{Prototypical cylinder decompositions in $E_D(4)$}
    We are interested in the horizontal and the vertical cylinder decompositions on $\modelA^\pm(e)$, for reduced prototypes $e \in S_D$.
    In every such case, we get a three-cylinder decomposition with two isometric cylinders. Thus, it is enough to consider the moduli of two non-isometric cylinders, say $m_1$ and $m_2$, in such decomposition in order to understand the action of the corresponding twist in such direction, say $\theta \in \{0,\pi\}$.

    Then, we can write
    \[
        r = \frac{m_1}{m_2} = \frac{p}{q} \in \QQ,
    \]
    where $p$ and $q$ are positive coprime integers. Then, the corresponding twist is described by taking $k_1 = q$ and $k_2 = p$. In other words, if $t = qm_1 = pm_2$, then the parabolic element $P_\theta^t$ belongs to $\SL(X)$, and $t > 0$ is minimal for this property. Since $t$ will be always chosen in this way, we can omit it from the notation and define $r_\theta = \frac{m_1}{m_2} = \frac{p}{q}$.

    By the previous discussion, we can compute the action of $P_\theta^t$ at the level of (regular) Prym fixed points, depending on the parity of the numbers $p$ and $q$.
    In fact, if two Prym fixed points lie on $\gamma_1$, then they are exchanged by $P_\theta^t$ if and only if $p$ is odd. Similarly, if they lie on $\gamma_2$, they are exchanged if and only if $q$ is odd.
    Here $\gamma_i$ is the core curve of the cylinder $C_i$ of modulus $m_i$, $i=1,2$.
    
    \noindent\textbf{Notation.}
    In what follows we denote
    \begin{align*}
        \frac{\text{odd}}{\text{odd}} &= \left\{\frac{p}{q} \in \QQ \ \mathbin{\Big|}\  p, q \in \NN, \gcd(p,q) = 1, p \modn{2} q \modn{2} 1\right\}, \\
        \frac{\text{even}}{\text{odd}} &= \left\{\frac{p}{q} \in \QQ \ \mathbin{\Big|}\  p, q \in \NN, \gcd(p,q) = 1, p \modn{2} 0, q \modn{2} 1\right\} \qquad \text{and} \\
        \frac{\text{odd}}{\text{even}} &= \left\{\frac{p}{q} \in \QQ \ \mathbin{\Big|}\  p, q \in \NN, \gcd(p,q) = 1, p \modn{2} 1, q \modn{2} 0\right\}.
    \end{align*}

    Additionally, we will always choose $C_1$ to be a cylinder with largest circumference for any model-$\modelA$ decomposition of a surface in $E_D(4)$. Accordingly, we refer to $C_1$ as a \emph{long cylinder} and $C_2$, as a \emph{short cylinder}.
    
    \subsection{Horizontal and vertical twists on reduced prototypes}
    \label{sec:parabolic_H(4)}
    Given a reduced prototype $e \in S_D$ of discriminant $D$, a quick examination of \Cref{fig:modelA+-,fig:Z_H(4)} shows that the horizontal and vertical twists on the associated prototypical surfaces $\modelA^{\pm}(e)$ induce either trivial elements or transpositions in $G(\modelA^{\pm}(e))$. The following lemma explicitly states the conditions so that Prym fixed points are exchanged or not.
    \begin{lem} \label{lem:horizontal_vertical_H(4)}
        Let $D > 4$ be a real quadratic discriminant such that $E_D(4)$ is non-empty and let $e \in S_D$. Let $b = \frac{D - e^2}{8}$. We have that:
        \begin{itemize}
            \item the horizontal twist acts on the Prym fixed points of $\modelA^{\pm}_D(e)$ either trivially or by the transposition $(1\;2)$. In the case of $\modelA^+_D(e)$, the action is non-trivial if and only if $b$ is odd, while it is always non-trivial for $\modelA^-_D(e)$; and
            \item the vertical twist acts on the Prym fixed points of $\modelA^{\pm}_D(e)$ either trivially or by the transposition $(1\;3)$. In the case of $\modelA^+_D(e)$ the action is always non-trivial, while it is non-trivial for $\modelA^-_D(e)$ if and only if $b - e - 2$ is odd.
        \end{itemize}
    \end{lem}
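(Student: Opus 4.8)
The proof is a direct application of the dictionary between affine twists and parities set up in \Cref{sec:parabolic-action}: once we know, for the horizontal (resp.\ vertical) cylinder decomposition of $\modelA^\pm_D(e)$, the ratio $r_\theta = m_1/m_2 = p/q$ of the moduli of the two non-isometric cylinders in lowest terms, together with the core curve on which the regular Prym fixed points sit, the action of the corresponding twist on those points is completely determined.

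First I would record the combinatorics of the four relevant decompositions. Specialising \Cref{fig:modelA+-} to a reduced prototype ($c = 1$, $a = 0$), the horizontal and the vertical directions on $\modelA^+_D(e)$ and on $\modelA^-_D(e)$ each give a three-cylinder decomposition with two isometric cylinders. Reading off the picture, the Prym involution $\iota$ — the central order-two symmetry of the polygonal model — preserves one of the three cylinders and acts on its core curve by the rotation fixing two antipodal points; these are two of the three regular Prym fixed points (the cone point lies on cylinder boundaries), and the remaining regular fixed point lies at the midpoint of an $\iota$-invariant boundary saddle connection, hence is fixed by the twist in that direction. Choosing the labels $w_1,w_2,w_3$ so that $\{w_1,w_2\}$ is the pair on the core curve in the horizontal decomposition and $\{w_1,w_3\}$ the pair in the vertical one — this is consistent, since $w_1$ is an interior fixed point for both directions, $w_2$ only for the horizontal and $w_3$ only for the vertical — it follows at once that the horizontal twist acts by $\mathrm{id}$ or $(1\;2)$ and the vertical twist by $\mathrm{id}$ or $(1\;3)$.

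It then remains to decide, in each of the four cases, whether the twist is trivial; by the criterion of \Cref{sec:parabolic-action} this amounts to the parity of whichever of $p$, $q$ corresponds to the $\iota$-preserved cylinder. I would read the circumferences and heights of a long and a short cylinder directly off \Cref{fig:modelA+-}; each is an explicit integer-linear combination of $\lambda$, $b$, $1$ and $e$. Using $D = e^2 + 8b$, equivalently $\lambda(\lambda - e) = 2b$ (so $\lambda - e = 2b/\lambda$), one clears $\sqrt{D}$ from $r_\theta$, reduces the fraction, and tracks parities; the outcome is that the relevant quantity is odd exactly when $b$ is odd in the horizontal direction of $\modelA^+_D(e)$, is always odd in the horizontal direction of $\modelA^-_D(e)$, is always odd in the vertical direction of $\modelA^+_D(e)$, and is odd exactly when $b-e-2$ is odd in the vertical direction of $\modelA^-_D(e)$. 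Together with the previous paragraph this gives the lemma. The only genuine work — and the main obstacle — is precisely this bookkeeping: one must read the two genus-three polygonal models correctly, pin down which cylinder is long and where the three regular Prym fixed points lie, and keep in mind that $\modelA^+$ and $\modelA^-$ are not merely exchanged by swapping the coordinate axes, since the vertical $\modelA^-$ case genuinely depends on $e$ through $b-e-2$ and not only on $b$.
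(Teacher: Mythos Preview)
Your proposal is correct and follows essentially the same approach as the paper: read off the three-cylinder decompositions from \Cref{fig:modelA+-}, locate the pair of regular Prym fixed points on the core curve of the $\iota$-invariant cylinder, compute the integer ratio $r_\theta=m_1/m_2$ (which is $b$ horizontally and, after the simplification $\lambda^2=e\lambda+2b$, equals $b-e-2$ vertically), and apply the parity criterion of \Cref{sec:parabolic-action}. The paper carries out the four moduli computations explicitly rather than sketching them, but the argument is the same.
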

    
    \begin{proof}
        The strategy of the proof is simply to analyse the cylinder decompositions and the ratio of the moduli of the resulting cylinders for the horizontal and vertical directions (c.f.~\cite[Section~4.3]{Gutierrez-Romo--Pardo:H2}).
        
        For the horizontal direction, the prototypical surface $\modelA^+_D(e)$ has two long cylinders of moduli $m_1 = b$, and one short cylinder of modulus $m_2 = 1$. The Prym fixed points $w_1$ and $w_2$ lie in the interior of the short cylinder. See \Cref{fig:modelA+-} (left). Therefore, the corresponding ratio is $r_{\mathrm{h}} = \frac{m_1}{m_2} = b$. Thus, the points are exchanged if and only if $r_{\mathrm{h}} \in \frac{\text{odd}}{\text{odd}} \sqcup \frac{\text{odd}}{\text{even}}$, which clearly happens if and only if $r_{\mathrm{h}}=b$ is odd, as it is an integer.
        
        Moreover, the prototypical surface $\modelA^-_D(e)$ has one long cylinder of modulus $m_1 = b$, and two short cylinders of moduli $m_2 = 1$. The Prym fixed points $w_1$ and $w_2$ lie in the interior of the long cylinder.
        See \Cref{fig:modelA+-} (right). We obtain that the corresponding ratio is $r_{\mathrm{h}} = \frac{m_1}{m_2} = b$ as in the previous case. Thus, the points are exchanged if and only if $r_{\mathrm{h}} \in \frac{\text{odd}}{\text{odd}} \sqcup \frac{\text{even}}{\text{odd}}$, which is always the case as $r_{\mathrm{h}}=b$ is an integer.
        
        The vertical direction can be treated similary. Indeed, observe that the prototypical surface $\modelA^+_D(e)$ has a long cylinder of modulus $m_1 = \frac{\lambda + 2}{\lambda}$, and two short cylinders of moduli $m_2 = \frac{1}{b - \lambda}$, in the vertical direction. The points $w_1$ and $w_3$ lie in the interior of the long cylinder. The corresponding ratio is
        \[
            r_{\mathrm{v}} = \frac{m_1}{m_2} = \frac{\frac{\lambda + 2}{\lambda}}{\frac{1}{b - \lambda}} = \frac{D - (e + 4)^2}{8} = b - e - 2
        \]
        which is also an integer so the points are exchanged unconditionally.
        
        Finally, the prototypical surface $\modelA^+_D(e)$ has two long cylinders of modulus $m_1 = \frac{\lambda/2 + 1}{\lambda/2}$ and a short cylinder of modulus $m_2 = \frac{1}{b - \lambda}$, in the vertical direction. The points $w_1$ and $w_3$ lie in the interior of the short cylinder. Thus, $r_{\mathrm{v}} = \frac{m_1}{m_2} = b - e - 2$, so these points are exchanged by the vertical twist if and only if $b - e - 2$ is odd.
    \end{proof}
    
    \begin{rem}
    \label{rem:butterfly_H(4)}
        To keep the exposition straightforward, we have omitted the concept of \emph{butterfly moves}. However, one can interpret the cylinder decomposition in the vertical direction on $\modelA^{\pm}_D(e)$ as the the result of a butterfly move $B_\infty$, transforming it into the canonical surface $\modelA^{\mp}_D(-e - 4)$ (see \cite[Proposition 7.5(2)]{Lanneau-Nguyen:H4}). This gives an alternative explanation on the dependence of the action induced by the vertical twist on the parity of $\frac{D - (e + 4)^2}{8}$.
    \end{rem}

    The previous lemma already allows us to show that $G(X)$ is non-trivial in most cases.
    \begin{prop}\label{prop:transposition}
        Let $D \neq 8$ be a real quadratic discriminant such that $E_D(4)$ is non empty and let $X \in E_D(4)$. Then, $G(X)$ contains a transposition, which is given by the action of a parabolic element in $\SL(X)$.
    \end{prop}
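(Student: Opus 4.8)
The plan is to combine \Cref{lem:horizontal_vertical_H(4)} with \Cref{thm:no-modelA:D=8}, reducing to the case of a prototypical surface. Since $G(X)$ and $G(g \cdot X)$ are conjugate subgroups of $\Sym(\{w_1, w_2, w_3\})$ for every $g \in \SL(2, \RR)$, and in particular a transposition in one corresponds to a transposition in the other, it suffices to exhibit a parabolic element of $\SL(S)$ inducing a transposition for some convenient representative $S$ of the $\GL^+(2, \RR)$-orbit of $X$. Because $D \neq 8$, \Cref{thm:no-modelA:D=8} guarantees that $X$ admits a cylinder decomposition in model $\modelA^+$ or model $\modelA^-$; equivalently (applying a suitable element of $\GL^+(2, \RR)$, which does not change the conjugacy class of $G$) we may assume $X = \modelA^\pm(a, b, c, e)$ for some splitting prototype $(a, b, c, e)$ of discriminant $D$. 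A small extra remark is needed here: the splitting prototype need not be reduced (i.e. $c = 1$ may fail), whereas \Cref{lem:horizontal_vertical_H(4)} is stated for reduced prototypes. I would handle this either by invoking that every component of $E_D(4)$ contains a reduced prototypical surface (so, up to $\GL^+(2,\RR)$, we may assume $c = 1$), or — more robustly — by re-running the modulus computation of the lemma for a general prototype $(a, b, c, e)$: the horizontal ratio becomes $r_{\mathrm{h}} = \frac{bc}{c^2}$ or similar, but the key point is only whether the relevant half-twist count is odd.

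The core of the argument is then a parity case analysis using \Cref{lem:horizontal_vertical_H(4)} (applied to the reduced representative, writing $b = \frac{D - e^2}{8}$ for a chosen $e \in S_D$). For model $\modelA^-_D(e)$ the horizontal twist always acts by the transposition $(1\;2)$, so we are immediately done whenever $X$ lies in a $\GL^+(2,\RR)$-orbit meeting model $\modelA^-$. For model $\modelA^+_D(e)$ the horizontal twist acts nontrivially iff $b$ is odd, and the vertical twist always acts nontrivially by $(1\;3)$; so again we get a transposition. The only subtlety is ensuring that for \emph{every} $X \in E_D(4)$ with $D \neq 8$ we can reach \emph{some} model-$\modelA^\pm$ representative: this is exactly the content of \Cref{thm:no-modelA:D=8}. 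Since that theorem says $X$ admits a cylinder decomposition in one of the two models, applying the corresponding $\SL(2,\RR)$-element puts $X$ into that model, and then one of the horizontal/vertical twists furnishes the required transposition. Note in particular that for model $\modelA^+$ the vertical twist alone always suffices, so no parity hypothesis on $b$ is even needed there.

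The main obstacle I anticipate is the bookkeeping between reduced and non-reduced prototypes: \Cref{lem:horizontal_vertical_H(4)} is phrased for $\modelA^\pm_D(e)$ with $e \in S_D$ (so $c = 1$), but the decomposition provided by \Cref{thm:no-modelA:D=8} is merely ``in model $\modelA^+$ or $\modelA^-$'', i.e. associated with some splitting prototype $(a,b,c,e)$ that may have $c > 1$. One clean way around this: observe that for a general model-$\modelA^-$ surface the long horizontal cylinder still has two short cylinders of equal modulus attached symmetrically, and the two Prym fixed points $w_1, w_2$ still lie on the core curve of the long cylinder, with the long-to-short modulus ratio an integer (namely $bc / c^2 \cdot$ adjustments, but integral), so the horizontal twist performs an odd number of half-turns on that core curve — hence the transposition $(1\;2)$. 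Likewise for model $\modelA^+$ the vertical direction gives an integral ratio $b - e - 2$ scaled appropriately, forcing $(1\;3)$. This is really just re-reading the figures and re-doing the elementary modulus computations of \Cref{lem:horizontal_vertical_H(4)} without the normalization $c = 1$; no new idea is required, only care with which cylinder is long and which pair of Prym points sits on it. Everything else is a finite check of parities, which is routine.
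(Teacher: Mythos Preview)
Your approach is correct and very close to the paper's, but the paper sidesteps the reduced-vs-non-reduced bookkeeping you worry about by arguing via the component structure of $E_D(4)$ rather than by normalising $X$ itself. Concretely, the paper does not try to put the given $X$ into model-$\modelA^\pm$ form. Instead, from \Cref{thm:no-modelA:D=8} it just deduces $S_D \neq \emptyset$, fixes any reduced $e \in S_D$, and applies \Cref{lem:horizontal_vertical_H(4)} to both $\modelA^+_D(e)$ and $\modelA^-_D(e)$: the vertical twist on $\modelA^+_D(e)$ always gives $(1\;3)$, and the horizontal twist on $\modelA^-_D(e)$ always gives $(1\;2)$. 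Then it uses the classification of components: for even $D$ the locus $E_D(4)$ is connected, so a transposition for either representative transfers to every $X$; for odd $D$ the two surfaces $\modelA^+_D(e)$ and $\modelA^-_D(e)$ lie in the two distinct components (\Cref{lem:spin-Prym}), so together they cover every $X$. Thus \Cref{lem:horizontal_vertical_H(4)} is only ever invoked for reduced prototypes, and no modulus computation for general $(a,b,c,e)$ is needed.

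Your first option for handling the reduced issue (``every component of $E_D(4)$ contains a reduced prototypical surface'') is precisely this argument, so once you take that branch you recover the paper's proof. Your second option (redoing the modulus computation for general $c$) would also work but is extra labour; note in particular that your sketch of the general ratio ``$r_{\mathrm{h}} = bc/c^2$'' and the claim that the long-cylinder half-twist count is automatically odd would need to be checked carefully against the actual side lengths in \Cref{fig:modelA+-}, which involve $\lambda$, $b$, $c$ and the shear $a$.
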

    \begin{proof}
        By \Cref{thm:no-modelA:D=8}, for any $D$ as in the statement there exists a model-$\modelA^{\pm}$ cylinder decomposition. In particular, $S_D \neq \emptyset$.
        
        Now, given a reduced prototype $e \in S_D$, by \Cref{lem:horizontal_vertical_H(4)}, the horizontal twist on $\modelA^-_D(e)$ gives $(1\;2) \in G(\modelA^-_D(e))$ and the vertical twist on $\modelA^+(e)$ gives $(1\;3) \in G(\modelA^+_D(e))$.
        
        If $D$ is even, $E_D(4)$ is connected, so this already proves the desired result for every $X \in E_D(4)$.
        On the other hand, if $D$ is odd and $E_D(4)$ is disconnected, \Cref{thm:Prym-classification} shows that $\modelA^-_D(e)$ and $\modelA^+(e)$ lie on different component, so the result also follows for any $X \in E_D(4)$.
    \end{proof}
    
    \begin{rem}\label{rem:Sym3}
        By \Cref{lem:horizontal_vertical_H(4)}, in order to get the entire symmetric group $\Sym_3$ from the horizontal and vertical twists on $\modelA^+_D(e)$, we need that $r_{\mathrm{h}} = b = \frac{D - e^2}{8}$ is odd, or, equivalently, that $D \modn{16} 8 + e^2$. Similarly, on $\modelA^-_D(e)$, we need $r_{\mathrm{v}} = \frac{D - (e+4)^2}{8}$ to be odd, that is, we need that $D \modn{16} 8 + (e+4)^2$.
        Since \Cref{lem:spin-Prym} shows that $\modelA^+(e)$ and $\modelA^-(-e-4)$ lie on the same component, verifying these conditions for either case is actually equivalent.
    \end{rem}
    
    The next series of lemmas establish \Cref{thm:main_H(4)} for $D \geq 12$ satisfying $D \modn{4} 0$ or $D \modn{8} 1$, with $D \notin \{25, 33\}$, and such that $D$ is either odd or an even quadratic nonresidue modulo 16. More precisely, we will show that $G(X) = \Sym_3$ for any $X \in E_D(4)$ for such $D$. Observe that if $D$ is even, it is a quadratic nonresidue if and only if $D \modn{16} 8$ or $D \modn{16} 12$. We will analyse these cases separately.
    
    \begin{lem}
        \label{lem:D=8 mod 16_H(4)}
        Let $D > 0$ be a real quadratic discriminant with $D \modn{16} 8$ and $X \in E_D(4)$. If $D \neq 8$, then $G(X) = \Sym_3$ and is generated by the action of two parabolic elements in $\SL(X)$.
    \end{lem}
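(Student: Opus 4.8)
The plan is to reduce to a single prototypical surface and then invoke \Cref{lem:horizontal_vertical_H(4)}. Since $D \modn{16} 8$ we have $8 \mid D$, so in particular $D \modn{4} 0$; and the hypothesis $D \neq 8$ forces $D \geq 24$. By \Cref{thm:Prym-classification}, the locus $E_D(4)$ is therefore non-empty and \emph{connected}. Since $G(X)$ and $G(g\cdot X)$ are conjugate inside $\Sym(\{w_1,w_2,w_3\}) = \Sym_3$ for every $g \in \SL(2,\RR)$, and $\Sym_3$ is its own only conjugate in $\Sym_3$, it is enough to exhibit a single surface $S \in E_D(4)$ with $G(S) = \Sym_3$; connectedness then propagates this equality to all of $E_D(4)$.

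For this I would take the reduced prototype $e = 0$, which lies in $S_D$: indeed $e^2 = 0 \modn{8} D$ because $8 \mid D$, $e^2 = 0 < D$, and $(e+4)^2 = 16 < D$ since $D \geq 24$. The associated prototypical surface $\modelA^+_D(0)$ has $b = \frac{D-e^2}{8} = \frac{D}{8}$, and this integer is odd precisely because $D \modn{16} 8$. (Equivalently, one is in the situation of \Cref{rem:Sym3}, as $D \modn{16} 8 + e^2$ here.)

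Applying \Cref{lem:horizontal_vertical_H(4)} to $\modelA^+_D(0)$: since $b$ is odd, the horizontal twist acts on the regular Prym fixed points by the transposition $(1\;2)$, while the vertical twist acts by $(1\;3)$ unconditionally. Both twists are parabolic elements of $\SL(\modelA^+_D(0))$, coming from the horizontal and vertical cylinder decompositions. Hence $G(\modelA^+_D(0))$ contains $\langle (1\;2),(1\;3)\rangle = \Sym_3$, so $G(\modelA^+_D(0)) = \Sym_3$ and it is generated by the action of two parabolic elements; the connectedness reduction then yields $G(X) = \Sym_3$ for every $X \in E_D(4)$. There is essentially no obstacle beyond bookkeeping at this point, the real content having been packaged into \Cref{lem:horizontal_vertical_H(4)} and \Cref{thm:Prym-classification}: the only point needing a moment's care is checking that $S_D \neq \emptyset$ (done above via $e=0$) and that the chosen prototype has $b$ odd, both immediate from $D \modn{16} 8$ and $D \geq 24$. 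In fact $8 \mid D$ forces $4 \mid e$ for every $e \in S_D$, so $b$ is odd for every reduced prototype of such a $D$; a single choice suffices.
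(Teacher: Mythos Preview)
Your proof is correct and follows essentially the same approach as the paper: reduce by connectedness of $E_D(4)$ to the single prototypical surface $\modelA^+_D(0)$ using $0 \in S_D$ (valid since $D \geq 24$), and then invoke \Cref{lem:horizontal_vertical_H(4)} (equivalently \Cref{rem:Sym3}, via $D \modn{16} 8 + 0^2$) to see that the horizontal and vertical twists give the transpositions $(1\;2)$ and $(1\;3)$. Your write-up is simply more explicit in checking the membership $0 \in S_D$ and the conjugacy reduction.
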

    \begin{proof}
        Clearly, $D \modn{8} 0$. By hypothesis, $D \geq 24$, so we have that $0 \in S_D$. 
        Since $E_D(4)$ is connected, it is enough to prove the result for the prototypical surface $\modelA^+_D(0)$. As discussed in \Cref{rem:Sym3}, it suffices to show that $D \modn{16} 8 + 0^2$, which also holds by hypothesis.
    \end{proof}
    
    \begin{rem}
        Observe that the previous proof fails for $D = 8$, as $0 \notin S_8 = \emptyset$. We postpone the proof for the case $D = 8$ to \Cref{sec:D=8_H(4)}.
    \end{rem}
    
    \begin{lem}
        \label{lem:D=12 mod 16_H(4)}
        Let $D > 0$ be a real quadratic discriminant with $D \modn{16} 12$ and let $X \in E_D(4)$. Then, $G(X) = \Sym_3$ and is generated by the action of two parabolic elements in $\SL(X)$.
    \end{lem}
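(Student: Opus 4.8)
The plan is to mimic the proof of \Cref{lem:D=8 mod 16_H(4)}: since $D \modn{16} 12$ forces $D \modn{4} 0$ and $D \geq 12$, \Cref{thm:Prym-classification} shows that $E_D(4)$ is non-empty and connected, so it is enough to prove $G(S) = \Sym_3$ for a single prototypical surface $S \in E_D(4)$ and to check that two parabolic elements of $\SL(S)$ already generate this $\Sym_3$.

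The difference with the case $D \modn{16} 8$ is that now $D \not\modn{8} 0$, hence $0 \notin S_D$ and the prototype $\modelA^+_D(0)$ is unavailable. Instead I would take $e = -2$. First I would verify that $-2 \in S_D$: one has $e^2 = 4 < D$, $(e+4)^2 = 4 < D$, and $e^2 \modn{8} D$ since $D \modn{8} 4$, so all conditions defining $S_D$ hold for every $D \modn{16} 12$, including $D = 12$. Then $b = \frac{D - e^2}{8} = \frac{D-4}{8}$, and since $D - 4 \modn{16} 8$ this $b$ is an odd integer.

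Finally I would apply \Cref{lem:horizontal_vertical_H(4)} to $\modelA^+_D(-2)$: the vertical twist acts on the regular Prym fixed points by the transposition $(1\;3)$ unconditionally, while the horizontal twist acts by $(1\;2)$ precisely because $b$ is odd (cf.\ \Cref{rem:Sym3}). These two distinct transpositions generate $\Sym(\{w_1, w_2, w_3\}) \cong \Sym_3$, and both are induced by parabolic elements of $\SL(\modelA^+_D(-2))$. By connectedness of $E_D(4)$ the same conclusion holds, up to conjugacy, for every $X \in E_D(4)$, giving $G(X) = \Sym_3$ generated by the action of two parabolic elements. I do not foresee a genuine obstacle: the only point that requires any care is selecting the reduced prototype $e = -2$ (rather than $e = 0$) and checking in one line that it lies in $S_D$ and makes $b$ odd; the rest is a direct appeal to \Cref{lem:horizontal_vertical_H(4)} and \Cref{thm:Prym-classification}.
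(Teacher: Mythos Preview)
Your proposal is correct and follows essentially the same approach as the paper's proof: choose the reduced prototype $e=-2$, verify $-2\in S_D$, observe that $D \modn{16} 12 = 8+(-2)^2$ makes $b=\frac{D-4}{8}$ odd so that \Cref{lem:horizontal_vertical_H(4)} (equivalently \Cref{rem:Sym3}) yields the two transpositions $(1\;2)$ and $(1\;3)$ from the horizontal and vertical twists on $\modelA^+_D(-2)$, and conclude by connectedness of $E_D(4)$. The paper's version is simply more terse, invoking \Cref{rem:Sym3} directly rather than unpacking the parity computation for $b$.
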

    \begin{proof}
        We have that $D \modn{8} 4$. By hypothesis, $D \geq 12$, so we have that $-2 \in S_D$. 
        Since $E_D(4)$ is connected, it is enough to prove the result for the prototypical surface $\modelA^+_D(-2)$. As discussed in \Cref{rem:Sym3}, it suffices to show that $D \modn{16} 8 + (-2)^2$, which also holds by hypothesis.
    \end{proof}
    
    \begin{lem}
        \label{lem:D=1 mod 8_H(4)}
        Let $D > 0$ be a real quadratic discriminant with $D \modn{8} 1$ and let $X \in E_D(4)$. If $D \geq 33$, then $G(X) = \Sym_3$ and is generated by the action of two parabolic elements in $\SL(X)$.
    \end{lem}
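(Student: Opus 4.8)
The plan is to follow the same template as in \Cref{lem:D=8 mod 16_H(4),lem:D=12 mod 16_H(4)}: reduce to prototypical surfaces and invoke \Cref{lem:horizontal_vertical_H(4)} together with \Cref{rem:Sym3}. The only genuinely new feature is that $E_D(4)$ is now disconnected --- by \Cref{thm:Prym-classification} it has exactly two connected components, each a single $\GL^+(2, \RR)$-orbit --- so it suffices to exhibit, in \emph{each} of the two components, a prototypical surface on which the horizontal and vertical twists generate $\Sym_3$.

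First I would record the available reduced prototypes. Since $D \modn{8} 1$, every odd integer $e$ satisfies $e^2 \modn{8} 1 \modn{8} D$, so $S_D$ is exactly the set of odd $e$ with $e^2 < D$ and $(e+4)^2 < D$. For $e \in \{-5, -3, -1, 1\}$ both $e^2$ and $(e+4)^2$ are at most $25$, so the hypothesis $D \geq 33$ gives $\{-5, -3, -1, 1\} \subseteq S_D$. Among these, $-3$ and $1$ are $\modn{4} 1$ while $-5$ and $-1$ are $\modn{4} 3$; hence, by \Cref{lem:spin-Prym}, $\modelA^+_D(-3)$ and $\modelA^+_D(1)$ lie in one component of $E_D(4)$ and $\modelA^+_D(-5)$, $\modelA^+_D(-1)$ lie in the other. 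By \Cref{rem:Sym3}, the horizontal and vertical twists on $\modelA^+_D(e)$ generate $\Sym_3$ exactly when $b = \frac{D - e^2}{8}$ is odd, i.e.\ when $D \modn{16} 8 + e^2$, since the vertical twist on $\modelA^+_D(e)$ is always non-trivial.

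Next I would split according to the residue of $D$ modulo $16$; as $D \modn{8} 1$, only $D \modn{16} 1$ and $D \modn{16} 9$ occur. If $D \modn{16} 1$, then for $e \in \{-5, -3\}$ we have $e^2 \modn{16} 9$, so $D - e^2 \modn{16} 8$ and $b$ is odd; by \Cref{lem:horizontal_vertical_H(4)} the horizontal twist on $\modelA^+_D(e)$ realises $(1\;2)$ and the vertical twist realises $(1\;3)$, and these generate $\Sym_3$. Since $-3 \modn{4} 1$ and $-5 \modn{4} 3$, this covers both components. If $D \modn{16} 9$, the same argument applies with $e \in \{-1, 1\}$, for which now $e^2 \modn{16} 1$ and again $D - e^2 \modn{16} 8$; and $1 \modn{4} 1$, $-1 \modn{4} 3$ cover both components. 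In either case $G(S) = \Sym_3$ for a prototypical representative $S$ of each component of $E_D(4)$, hence $G(X) = \Sym_3$ for every $X \in E_D(4)$, and it is generated by the horizontal and vertical twists, which are parabolic --- consistently with \Cref{cor:main_H(4)}.

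I do not anticipate a real obstacle: the mathematics is carried entirely by \Cref{lem:horizontal_vertical_H(4),lem:spin-Prym}, and what remains is the bookkeeping of two congruences --- ``$e$ small enough to lie in $S_D$'' and ``$e^2 \modn{16} D - 8$'' --- across both residue classes of $e$ modulo $4$ at once. The only delicate point is that this needs the four small prototypes $-5, -3, -1, 1$ to be simultaneously available, which forces $D > 25$; this is exactly why the borderline discriminants $D \in \{17, 25\}$ escape this argument and are treated separately in \Cref{sec:remaining}.
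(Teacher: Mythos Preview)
Your proposal is correct and follows essentially the same route as the paper's proof: split into the two residue classes $D \modn{16} 1$ and $D \modn{16} 9$, choose reduced prototypes $e \in \{-5,-3\}$ in the first case and $e \in \{-1,1\}$ in the second so that $b = \frac{D-e^2}{8}$ is odd, invoke \Cref{rem:Sym3} to get $G(\modelA^+_D(e)) = \Sym_3$, and appeal to \Cref{lem:spin-Prym} (the two chosen $e$'s differ modulo~$4$) to cover both components. The only cosmetic difference is that you verify $\{-5,-3,-1,1\} \subseteq S_D$ uniformly from $D \geq 33$, whereas the paper checks just the two relevant prototypes in each subcase.
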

    \begin{proof}
        Assume first that $D \modn{16} 1$. By hypothesis, $D \geq 33$, so we have that $\{-5, -3\} \subseteq S_D$. Observe that $D \modn{16} 8 + e^2$ for $e \in \{-5, -3\}$, so, as discussed in \Cref{rem:Sym3}, we obtain directly that $G(\modelA^+_D(-5)) = \Sym_3$ and that $G(\modelA^+_D(-3)) = \Sym_3$. Since, by \Cref{lem:spin-Prym}, these surfaces lie on different components, we obtain the result for any $X \in E_D(4)$.
        
        Now, assume that $D \modn{16} 9$. By hypothesis, $D \geq 41$ and, therefore, we have that $\{-1, 1\} \subseteq S_D$. In this case, $D \modn{16} 8 + e^2$ for $e \in \{-1, 1\}$. The result follows analogously.
    \end{proof}
    
    \begin{rem}
        Observe that the previous proof fails for $D \in \{17, 25\}$. We postpone the proof for these cases to \Cref{sec:D=17_H(4),sec:D=25_H(4)}.
    \end{rem}

    \section{Restrictions for permutations}
    \label{sec:restrictions}
    In the cases where $D \nmodn{16} 8+e^2$ for every $e \in S_D$, the arguments in the previous section do not allows us to obtain the full symmetric group $\Sym_3$ and, in fact, generating this group is not possible.
    More precisely, we shall see that, in these cases, the group of permutations of Prym fixed points consists only on the identity and a transposition.
    We follow a similar strategy to that in \cite[Section~5]{Gutierrez-Romo--Pardo:H2} by imposing restrictions for a matrix to act on the Prym fixed points in terms of their relative position on a given surface. Note that $D \modn{16} e^2$ for every $e \in S_D$ if and only if $D \modn{16} 0$ or $4$, so the remaining cases are precisely those where $D$ is a quadratic residue modulo $16$.
    
    \subsection{Arithmetic case}
    \label{sec:arithmetic}

    Let $D = d^2$, where $d > 4$ is an even integer. The Prym locus $E_{d^2}(4)$ consists of a single $\GL^+(2, \RR)$-orbit of square-tiled surfaces. A (primitive) \emph{square-tiled surface} is a translation surface $(X, \omega)$ such that the set of periods of $\omega$ is the lattice $\ZZ[i] \subset \CC$. In other words, integrating $\omega$ along any saddle connection results in an element of $\ZZ[i]$. For such a surface, integrating $\omega$ yields a holomorphic map
    \[
        p \colon X \to E = \CC / \ZZ[i],
    \]
    which can be normalized so that it is branched only over $z = 0$.

    The Prym involution descends to involution $z \mapsto -z$ on $E$ and therefore, a Prym fixed point $w \in X$ can only project to a $2$-torsion point on the unit torus.
    The HLK-invariant is the combinatorial structure of how the Prym fixed point projects onto the square torus, and it imposes constraints on how these points can be permuted by the affine group, since affine transformations commute with the projection.
    
    Let $e \in S_{d^2}$ and consider the re-scaled prototypical surface $X = \modelZ_D(e)$. This surface is a primitive square-tiled surface as in \Cref{fig:rep-d2}.
    Moreover, since $d^2 \modn{16} e^2$, we have that $d \modn{4} e$.
    It follows that $\lambda - e = \frac{d-e}{2}$ is even and, therefore, that $w_1$ and $w_2$ project down to $v = i/2 \in \CC/\ZZ[i]$, and that $w_3$ projects down to $0 \in \CC/\ZZ[i]$.
    Thus, it is not possible to exchange $w_3$ with $w_1$ or $w_2$.
    By \Cref{prop:transposition}, we get that $G(X) \cong \Sym_2$.
        
    \begin{figure}[t!]
        \centering
        \includegraphics[scale=1]{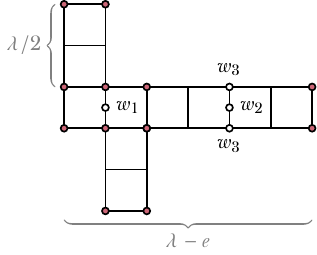}
        \caption{The square-tiled surface $\modelZ_{d^2}(e) \in E_{d^2}(4)$, for even $d>4$.
        This example shows $\modelZ_{100}(-2)$.}
        \label{fig:rep-d2}
    \end{figure}
    
    Since the locus $E_D(4)$ is connected for even $D$, we get the following:
        
    \begin{prop}
    \label{prop:arith-d2_H(4)}
        Let $d > 4$ be an even integer and let $X \in E_{d^2}(4)$. Then, $G(X) \cong \Sym_2$. Moreover, $G(X)$ is generated by the action of a single parabolic element in $\SL(X)$.
        \qed
    \end{prop}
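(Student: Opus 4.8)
The plan is to use that $E_{d^2}(4)$ is a single $\GL^+(2, \RR)$-orbit, so that $G(X)$ is, up to conjugacy, determined by its value on a single well-chosen representative — recall that $G(g \cdot X)$ and $G(X)$ are conjugate for $g \in \GL^+(2, \RR)$. I would take as representative the re-scaled prototypical square-tiled surface $X = \modelZ_{d^2}(e) \in E_{d^2}(4)$ for some $e \in S_{d^2}$ (such an $e$ exists since $d^2 > 4$, as in the proof of \Cref{prop:transposition}). A preliminary observation: since $d$ is even and $e \in S_{d^2}$, we have $e^2 \modn{8} d^2$, which forces $e$ to be even with $e \modn{4} d$; in particular $d^2 \modn{16} e^2$.

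Next, I would set up the arithmetic picture. As $\modelZ_{d^2}(e)$ is a primitive square-tiled surface (\Cref{fig:rep-d2}), integrating $\omega$ gives a holomorphic map $p \colon X \to E = \CC/\ZZ[i]$ branched only over $0 \in E$. Since the Prym involution $\iota$ satisfies $\iota^* \omega = -\omega$, it descends to $z \mapsto -z$ on $E$, so every regular Prym fixed point of $X$ projects to one of the four $2$-torsion points of $E$. The key computation is to identify these projections: using that $\lambda - e = \tfrac{d - e}{2}$ is even, one reads off from the explicit coordinates of \Cref{fig:rep-d2} that $w_1$ and $w_2$ both project to $i/2 \in E$, whereas $w_3$ projects to $0 \in E$ (the singularity also projects to $0$, but it is fixed by every affine map, being the unique zero of $\omega$).

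With this in hand the upper bound follows at once. Any $\phi \in \Aff(X)$ descends to an affine automorphism of $E$ fixing the branch point $0$; since $p(w_3) = 0$ while $p(w_1) = p(w_2) = i/2 \neq 0$ and $\phi$ permutes $\{w_1, w_2, w_3\}$, we must have $\phi(w_3) = w_3$, i.e.\ $G(X) \subseteq \Sym(\{w_1, w_2\})$. For the matching lower bound I would apply \Cref{prop:transposition}, valid since $d^2 \neq 8$: it gives a transposition in $G(X)$ realized by a parabolic element of $\SL(X)$, and by the upper bound this transposition is $(1\;2)$. Hence $G(X) = \Sym(\{w_1, w_2\}) \cong \Sym_2$, generated by that single parabolic element. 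Finally, since $d^2 > 17$ and $d^2 \modn{4} 0$, \Cref{thm:Prym-classification} shows $E_{d^2}(4)$ is connected, so the conclusion transfers from $\modelZ_{d^2}(e)$ to every $X \in E_{d^2}(4)$.

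The main obstacle is the explicit determination of $p(w_1), p(w_2), p(w_3)$ on the torus: this is the place where one must track the period vectors of the Prym fixed points in the square-tiled model $\modelZ_{d^2}(e)$ and use the parity of $(d-e)/2$. The remaining ingredients are standard: orbit-invariance of $G$ up to conjugacy, the citations of \Cref{prop:transposition} and \Cref{thm:Prym-classification}, and the general principle that affine maps descend to torus automorphisms preserving the branch locus.
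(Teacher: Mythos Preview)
Your proposal is correct and follows essentially the same approach as the paper: choose the square-tiled representative $\modelZ_{d^2}(e)$, use the parity $d \modn{4} e$ to compute that $w_1,w_2$ project to $i/2$ and $w_3$ to $0$ on the torus, conclude the upper bound from the HLK-invariant (affine maps descend and fix the branch point), and obtain the lower bound from \Cref{prop:transposition}. Your write-up is slightly more explicit than the paper's about why the descended affine map fixes $0$ and hence forces $\phi(w_3)=w_3$, but the argument is the same.
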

    
    \subsection{Non-arithmetic case}
    \label{sec:non-arithmetic}
    Following ideas from \cite[Section~5.2]{Gutierrez-Romo--Pardo:H2} in the genus two case, we reduce the non-arithmetic case to the arithmetic case. The computations in the genus-three case are much more subtle, so we need some simple results before we are able to state the reduction.
    For this, consider first the auxiliary set
    \[
        R_D = \{e \in \ZZ \ \mathbin{|}\  e \modn{2} D, e^2 < D, (e+2)^2 < D\}.
    \]
    This set is in fact meaningful. It plays the role of reduced prototypes in the genus~$2$ case (see, e.g., \cite[Section~3.2.2]{Gutierrez-Romo--Pardo:H2}).
    
    \begin{lem}
        Let $D > 16$ be an even real quadratic discriminant such that $E_D(4)$ is non-empty. Then, $\frac{1}{2} S_D = R_{D/4}$.
    \end{lem}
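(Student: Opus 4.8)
The plan is to establish the set equality directly, by translating the three defining conditions of $S_D$ under the substitution $e = 2f$.

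First I would record two preliminary observations. Every $e \in S_D$ is even: since $D$ is even and $e^2 \modn{8} D$, the integer $e^2$ is even, hence so is $e$; in particular $\frac{1}{2} S_D$ is genuinely a set of integers, so the statement makes sense. Second, since $D$ is even, $D > 16$ and $E_D(4) \neq \emptyset$, \Cref{thm:Prym-classification} forces $D \modn{4} 0$; thus $D/4 \in \ZZ$ and $R_{D/4}$ is well-defined.

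Now write $e = 2f$ with $f \in \ZZ$. The congruence $e^2 \modn{8} D$ reads $4f^2 \modn{8} D$; since $4 \mid D$ this is equivalent to $f^2 \modn{2} D/4$, and because $f^2 \modn{2} f$ this in turn is equivalent to $f \modn{2} D/4$. The inequality $e^2 < D$ becomes $4 f^2 < D$, i.e.\ $f^2 < D/4$, and $(e+4)^2 < D$ becomes $4(f+2)^2 < D$, i.e.\ $(f+2)^2 < D/4$. These three conditions on $f$ are exactly those defining membership in $R_{D/4}$. Hence $e \in S_D$ if and only if $f = e/2 \in R_{D/4}$, which yields the claimed identity $\frac{1}{2} S_D = R_{D/4}$.

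There is no substantial obstacle here; the only points requiring care are the two preliminary observations — that elements of $S_D$ are even, so the left-hand side is a set of integers, and that the classification guarantees $4 \mid D$, which is precisely what legitimizes dividing the mod-$8$ congruence down to a mod-$2$ congruence.
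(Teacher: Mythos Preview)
Your proof is correct and follows essentially the same approach as the paper's own proof: both establish the two preliminary observations (that elements of $S_D$ are even and that $4 \mid D$) and then translate each of the three defining conditions of $S_D$ under the substitution $e = 2f$ to obtain exactly the three defining conditions of $R_{D/4}$.
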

    \begin{proof}
        Recall first that $D \modn{4} 0$, so $D/4$ is indeed an integer. Likewise, every element of $S_D$ is even, so $\frac{1}{2} S_D$ consists entirely of integers.
        
        Observe that each condition in the definition of $\frac{1}{2}S_D$ translates to a condition in the definition of $R_{D/4}$. More precisely, given $e \in S_D$, we have that $e^2 \modn{8} D$ if and only if $\left(\frac{e}{2}\right)^2 \modn{2} \frac{e}{2} \modn{2} \frac{D}{4}$. Moreover, the condition $e^2 < D$ is equivalent to $\left(\frac{e}{2}\right)^2 < \frac{D}{4}$. Finally, $(e + 4)^2 < D$ holds if and only if $\left(\frac{e}{2} + 2\right)^2 < \frac{D}{4}$.
    \end{proof}
    
    If $D > 16$ is an even quadratic discriminant and $e \in S_D$, then we know that $\ZZ[\lambda] = \O_D$. The previous lemma shows that, moreover, $\ZZ[\lambda/2] = \O_{D/4}$. Indeed, we have that
    \[
        \frac{\lambda}{2} = \frac{e + \sqrt{D}}{4} = \frac{e/2 + \sqrt{D/4}}{2}
    \]
    and $\frac{e}{2} \in R_D$. This can be extended to other generators of $\O_D$ as shown by the next lemma:
    
    \begin{lem}
        Let $D > 0$ be an even real quadratic discriminant and assume that it is a quadratic residue modulo $16$, say $D \modn{16} d^2$ for some $d \in \ZZ$. Take $\rho = \frac{\sqrt{D} - d}{2}$. Then, we have that $\ZZ[\rho/2] = \O_{D/4}$.
    \end{lem}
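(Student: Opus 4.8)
The plan is to show directly that $\rho/2$ generates the quadratic order of discriminant $D/4$. This is the same mechanism used implicitly in the two preceding lemmas (where one checks $\ZZ[\lambda] = \O_D$ and $\ZZ[\lambda/2] = \O_{D/4}$): inside a fixed real quadratic field, an order is determined by its discriminant, and if $\theta$ is an algebraic integer with minimal polynomial $x^2 + bx + c$ over $\ZZ$, then $\ZZ[\theta]$ is the order whose discriminant equals $b^2 - 4c$ (the basis $\{1,\theta\}$ has this discriminant). So it suffices to produce the minimal polynomial of $\rho/2$ and read off that its discriminant is $D/4$.

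First I would record the elementary consequences of the hypotheses. Since $D$ is an even discriminant, $D \modn{4} 0$; and from $D \modn{16} d^2$ it follows that $d^2$ is even, hence $d$ is even and $d/2 \in \ZZ$. Because $D$ is a real quadratic discriminant it is not a perfect square, so neither is $D/4$; thus $\QQ(\sqrt{D}) = \QQ(\sqrt{D/4})$ is a genuine quadratic field containing $\O_{D/4}$, and $\rho/2 = \frac{\sqrt{D} - d}{4} \notin \QQ$, so it has a well-defined quadratic minimal polynomial.

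Then comes the computation: from $4(\rho/2) + d = \sqrt{D}$ one squares to get $16(\rho/2)^2 + 8d(\rho/2) + d^2 - D = 0$, that is,
\[
    (\rho/2)^2 + \frac{d}{2}\,(\rho/2) + \frac{d^2 - D}{16} = 0.
\]
The linear coefficient $d/2$ is an integer because $d$ is even, and the constant term $\frac{d^2 - D}{16}$ is an integer precisely because $D \modn{16} d^2$ — this is the one and only place where the full strength of the mod-$16$ assumption is used, and it is exactly what makes $\rho/2$ an algebraic integer. Hence the displayed polynomial is the minimal polynomial of $\rho/2$, and its discriminant is $\left(\frac{d}{2}\right)^2 - 4\cdot\frac{d^2 - D}{16} = \frac{d^2 - (d^2 - D)}{4} = \frac{D}{4}$. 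Therefore $\ZZ[\rho/2]$ is an order of $\QQ(\sqrt{D})$ of discriminant $D/4$, and so $\ZZ[\rho/2] = \O_{D/4}$.

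I do not expect a genuine obstacle here: the argument is short and self-contained, and there is nothing analogous to the geometric input used elsewhere in the paper. The only point requiring care is to pinpoint that integrality of $\frac{d^2-D}{16}$ (equivalently, of the constant term of the minimal polynomial) is exactly the mod-$16$ hypothesis. As a sanity check, the same computation applied to $\rho$ itself gives the minimal polynomial $x^2 + dx + \frac{d^2 - D}{4}$, which is integral since $D \modn{4} d^2$ and has discriminant $D$, re-deriving $\ZZ[\rho] = \O_D$.
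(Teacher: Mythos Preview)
Your proof is correct but takes a genuinely different route from the paper's. The paper does not compute the minimal polynomial of $\rho/2$ directly; instead it picks a reduced prototype $e \in S_D$, writes $\rho = \lambda - \frac{e+d}{2}$ with $\lambda = \frac{e+\sqrt{D}}{2}$, and then argues that $e + d \modn{4} 0$ (from $D \modn{16} d^2$ and $D \modn{8} e^2$ together with both $e,d$ even), so that $\rho/2 = \lambda/2 - \frac{e+d}{4}$ differs from $\lambda/2$ by an integer. The conclusion $\ZZ[\rho/2] = \ZZ[\lambda/2] = \O_{D/4}$ then follows from the \emph{preceding} lemma. Your argument is more self-contained: it bypasses the prototype parameter $e$ and the earlier lemma entirely, and isolates cleanly that the mod-$16$ hypothesis is exactly the integrality of the constant term $\frac{d^2 - D}{16}$. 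The paper's route, by contrast, keeps everything phrased in terms of the prototype data already in play, which is convenient for the subsequent computations with $\modelZ_D(e)$. One small quibble: your sentence ``since $D$ is a real quadratic discriminant it is not a perfect square'' is not how the paper uses the term (perfect-square discriminants are allowed, and treated in \Cref{sec:arithmetic}); but since this lemma sits in the non-arithmetic subsection, the assumption is legitimate in context.
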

    \begin{proof}
        If $e \in S_D$ and $\lambda = \frac{e + \sqrt{D}}{2}$, we have that $\rho = \lambda - \frac{e + d}{2}$. Note that $e + d \modn{4} 0$. Indeed, we have that $D \modn{16} d^2$ and $D \modn{8} e^2$, which implies that $e^2 \modn{8} d^2$. Since both $e$ and $d$ are even, this implies that $e \modn{4} d$, and, again using that they are both even, we obtain that $e + d \modn{4} 0$. Thus,
        \[
            \frac{\rho}{2} = \frac{\lambda}{2} - \frac{e + d}{4} = \frac{e/2 + \sqrt{D/4}}{2} - \frac{e + d}{4} = \frac{\lambda}{2} - \frac{e + d}{4}
        \]
        Since $\frac{e + d}{4}$ an integer, we conclude that $\ZZ[\rho/2] = \O_{D/4}$.
    \end{proof}

    \begin{lem}
        Let $D > 4$ be even and such that $E_D(4)$ is non-empty. Let $e \in S_D$ and let $A \in \SL(\modelZ_D(e))$. Then, $A = \Big(\begin{smallmatrix} \alpha & \beta \\ \gamma & \delta \end{smallmatrix}\Big)$ for some $\alpha, \beta \in \O_D$ and $\gamma, \delta \in \O_{D/4}$.
    \end{lem}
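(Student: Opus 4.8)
The plan is to reduce everything to the non-arithmetic prototype $\modelA^-_D(e)$, whose Veech group is better understood. By construction (\Cref{fig:Z_H(4)}), $\modelZ_D(e)$ is the image of $\modelA^-_D(e)$ under $M := \left(\begin{smallmatrix} 2\lambda^{-1} & 0 \\ 0 & 1 \end{smallmatrix}\right) \in \GL^+(2,\RR)$, so $\SL(\modelZ_D(e)) = M\,\SL(\modelA^-_D(e))\,M^{-1}$. Writing a generic element of $\SL(\modelA^-_D(e))$ as $B = \left(\begin{smallmatrix} a & b \\ c & d \end{smallmatrix}\right)$, one computes
\[
    A = MBM^{-1} = \begin{pmatrix} a & 2\lambda^{-1} b \\ \tfrac{\lambda}{2}\,c & d \end{pmatrix},
\]
so it is enough to show that for every such $B$ one has $a \in \O_D$, $2\lambda^{-1}b \in \O_D$, $\tfrac{\lambda}{2}c \in \O_{D/4}$ and $d \in \O_{D/4}$.

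To constrain $a,b,c,d$ I would use two features of the prototype $\modelA^-_D(e) \in E_D(4)$. First, as a Veech surface it has holonomy (hence trace) field $\QQ(\sqrt D) = \QQ(\lambda)$, so all entries of $B$ lie in $\QQ(\lambda)$. Second, from the explicit polygon in \Cref{fig:modelA+-} one reads off the holonomy module $\Lambda := \mathrm{hol}(H_1(\modelA^-_D(e);\ZZ)) \subset \CC$: the horizontal saddle connections have lengths in $\ZZ[\lambda] = \O_D$, while the vertical ones have lengths in $\ZZ[\lambda/2] = \O_{D/4}$, invoking the identities proved in the preceding lemmas (which rely on $b = (D-e^2)/8$ being even, i.e.\ on $D$ being a quadratic residue modulo $16$ — the only case in which $\modelZ_D(e)$ is used). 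Since $B$ preserves $\Lambda$, evaluating $B$ on the generators of $\Lambda$ already confines its columns to $\Lambda$; combining this with the fact that $\modelA^-_D(e)$ is a \emph{Prym eigenform}, so that $B$ additionally commutes with the action of $\O_D$ by real multiplication on the anti-invariant homology — which, via the eigenform relation, descends through $\mathrm{hol}$ to an extra $\O_D$-linearity constraint on $B$ — pins down the shape of $B$: one finds $a, d \in \O_D$, $b \in \tfrac{\lambda}{2}\,\O_D$ and $c \in 2\lambda^{-1}\,\O_D$.

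Granting this shape, the four required containments follow by elementary arithmetic in the nested orders $\O_D \subseteq \O_{D/4}$ (the inclusion holding because $\lambda = 2\cdot(\lambda/2) \in \ZZ[\lambda/2]$): $\alpha = a \in \O_D$; $\beta = 2\lambda^{-1}b \in 2\lambda^{-1}\cdot\tfrac{\lambda}{2}\O_D = \O_D$; $\gamma = \tfrac{\lambda}{2}c \in \tfrac{\lambda}{2}\cdot 2\lambda^{-1}\O_D = \O_D \subseteq \O_{D/4}$; and $\delta = d \in \O_D \subseteq \O_{D/4}$. This yields the lemma.

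The hard part is the middle paragraph: determining the exact shape of $\SL(\modelA^-_D(e))$ — in particular which fractional ideals the off-diagonal entries occupy — from the polygon together with the eigenform/real-multiplication structure, while correctly tracking the factors of $2$. It is precisely this horizontal-versus-vertical asymmetry, introduced by the rescaling $M$, that forces the statement to place $\gamma$ and $\delta$ in the larger order $\O_{D/4}$ rather than $\O_D$; one must verify that the slanted saddle connections of $\modelA^-_D(e)$ do not destroy the containment $\Lambda \subseteq \O_D \times \O_{D/4}$. Once the shape of $B$ is fixed, the rest is routine.
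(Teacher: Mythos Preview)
Your detour through $\modelA^-_D(e)$ and real multiplication is both unnecessary and leads you to a \emph{false} intermediate claim. You assert that every $B=\left(\begin{smallmatrix}a&b\\c&d\end{smallmatrix}\right)\in\SL(\modelA^-_D(e))$ has $c\in 2\lambda^{-1}\O_D$ and $d\in\O_D$, which after conjugation would give $\gamma=\tfrac{\lambda}{2}c\in\O_D$ and $\delta=d\in\O_D$. But already the vertical twist on $\modelZ_D(e)$ has lower-left entry $\gamma=\tfrac{\lambda}{2}+1$ (its vertical long cylinders have modulus $\tfrac{\lambda}{2}+1$), and $\tfrac{\lambda}{2}+1\in\O_{D/4}\setminus\O_D$ for every non-square $D$. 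So the ``shape'' you claim for $B$ is wrong, and whatever the real-multiplication paragraph is meant to establish, it cannot establish that. The lemma only places $\gamma,\delta$ in the \emph{larger} order $\O_{D/4}$, and that is sharp. (Incidentally, the lemma is stated for all even $D>4$ with $E_D(4)\neq\emptyset$, not only for quadratic residues modulo~$16$; your parenthetical restriction is a misreading, and the identity $\ZZ[\lambda/2]=\O_{D/4}$ needs only $e\in S_D$.)

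The paper's argument sidesteps all of this by working directly on $\modelZ_D(e)$ rather than conjugating back. The whole point of the rescaling $M$ is that on $\modelZ_D(e)$ the horizontal side-lengths and horizontal components of translation vectors lie in $\O_D$, while the vertical ones lie in $\O_{D/4}$; one then invokes (as in \cite[Lemma~5.4]{Gutierrez-Romo--Pardo:H2}) that each column of an element of the Veech group is the holonomy of a saddle connection, hence lies in $\O_D\times\O_{D/4}$. No conjugation, no fractional ideals, no appeal to the real-multiplication structure --- just read the two orders off the polygon of $\modelZ_D(e)$. If you want to salvage your route, you would need to prove the finer containment $b\in\tfrac{\lambda}{2}\O_D$ for the $(1,2)$-entry while simultaneously \emph{weakening} your claims on $c,d$ to $\O_{D/4}$; but at that point you are reproving, in a harder coordinate system, exactly what the rescaled polygon makes immediate.
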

    \begin{proof}
        Fix $A \in \SL(\modelZ_D(e))$. We proceed as in the proof \cite[Lemma~5.4]{Gutierrez-Romo--Pardo:H2}, which shows that each column of $A$ is a holonomy vector of a saddle connection. As the horizontal sides and the horizontal coordinates of translation vectors of $\modelZ_D(e)$ belong to $\O_D$, while the vertical sides and vertical coordinates of translation vectors of $\modelZ_D(e)$ belong to $\O_{D/4}$, we get the desired result.
    \end{proof}

    In the following, we focus on the relative positions with respect to the singularity of the regular Prym fixed points in $\modelZ_D(e)$. In particular, we are interested in elements in $\O_{D/4} \times \frac{1}{2}\O_D$.
    In fact, if $v_i$ is the corresponding vector for the regular Prym fixed point $w_i$, $i \in \{1,2,3\}$, then $v_i \in \O_{D/4} \times \frac{1}{2}\O_D$ (modulo $\O_D \times \O_{D/4}$).
    
    For the rest of this section, we assume that $D$ is a quadratic residue modulo $16$, say $D \modn{16} d^2$. We also fix $\rho = \frac{\sqrt{D} - d}{2}$, so $\O_D = \ZZ[\rho]$.
    Each element $x$ of the field $\QQ(\sqrt{D})$ can be uniquely expressed as $x = p + q\rho$ with $p, q \in \QQ$. We refer to $p$ as the \emph{rational part} of $x$ (note that this depends on the choice of $\rho$). Additionally, we define $\fr(x)$ as the fractional part of the rational part of $x$. Namely, $\fr(x) = \{p\} = (p \bmod{1})$.

    \noindent\textbf{Notation.}
    \begin{itemize}[beginpenalty=10000,endpenalty=10000]
        \item For $A = \Big(\begin{smallmatrix} \alpha & \beta \\ \gamma & \delta \end{smallmatrix}\Big)$, with $\alpha, \beta \in \O_D$ and $\gamma, \delta \in \O_{D/4}$, we write
        \begin{itemize}
            \item $\eta = p_\eta + q_\eta\rho \in \O_D$, where $p_\eta, q_\eta \in \ZZ$ for each $\eta \in \{\alpha, \beta\}$; and
            \item $\eta = p_\eta + q_\eta \frac{\rho}{2} \in \O_{D/4}$, where $p_\eta, q_\eta \in \ZZ$ for each $\eta \in \{\gamma, \delta\}$. 
        \end{itemize}
        We also write $P_A = \Big(\begin{smallmatrix} p_\alpha & p_\beta \\ p_\gamma & p_\delta \end{smallmatrix}\Big)$ and $Q_A = \Big(\begin{smallmatrix} 2q_\alpha & 2q_\beta \\ q_\gamma & q_\delta \end{smallmatrix}\Big)$, so $A = P_A + Q_A \frac{\rho}{2}$. Observe that both $P_A$ and $Q_A$ have integer coefficients and, moreover, that $\det(Q_A) \modn{2} 0$.
        \item For $v \in \O_{D/4} \times \frac{1}{2}\O_D$, we write $v = \left(p_1 + q_1\frac{\rho}{2}, \frac{p_2}{2} + \frac{q_2}{2} \rho\right)$, where $p_i, q_i \in \ZZ$ for each $i \in \{1,2\}$. We write $p_v = (2p_1, p_2)$ and $q_v = (q_1, q_2)$ as well, so we have $v = \frac{1}{2}(p_v + q_v \rho)$.
    \end{itemize}
    Then, 
    \begin{align} 
        A v &= \frac{1}{2} \left(P_A + Q_A \frac{\rho}{2}\right)(p_v + q_v \rho) = \frac{1}{2} \left(P_A p_v + Q_A q_v \frac{\rho^2}{2} + (P_A q_v + Q_A p_v) \frac{\rho}{2}\right) \notag \\
        &= \frac{1}{2}P_A p_v + Q_A q_v \left(\frac{D - d^2}{16} - \frac{d \rho}{4}\right) + (P_A q_v + Q_A p_v) \frac{\rho}{4} \notag \\
        &= \frac{1}{2}P_A p_v + \frac{D - d^2}{16} Q_A q_v + \left(P_A q_v + Q_A p_v - d Q_A q_v\right) \frac{\rho}{4} \label{eq:Av_H(4)}
    \end{align}

    \begin{lem}
        Let $D > 0$ be a real quadratic discriminant and assume that it is a quadratic residue modulo $16$, say $D \modn{16} d^2$ for some $d \in \ZZ$, and let $e \in S_D$ and $A \in \SL(\modelZ_D(e))$. Take $\rho = \frac{\sqrt{D} - d}{2}$. Then, we have $\fr(A v) = \frac{1}{2} P_A p_v \bmod{1}$ for any $v \in \O_{D/4} \times \left(\frac{1}{2}\O_D\right)$. 
        Moreover, $P_A$ is non-singular modulo~$2$.
    \end{lem}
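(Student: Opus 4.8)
\emph{Overall approach.} Both assertions should fall out of the displayed expansion \eqref{eq:Av_H(4)} of $Av$, the only non-formal ingredient being the hypothesis $D \modn{16} d^2$: it makes $\tfrac{D-d^2}{16}$ an integer, hence $\tfrac{D-d^2}{8}$ even, and this single fact is what I would use in each part. Nothing deeper seems to be required.

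\emph{The fractional part of $Av$.} Equation \eqref{eq:Av_H(4)} already writes $Av$ as a vector with rational entries plus $\tfrac{\rho}{4}$ times another vector with rational entries; since $\{1,\rho\}$ is a $\QQ$-basis of $\QQ(\sqrt D)$, this expresses the (entrywise) rational part of $Av$ as $\tfrac12 P_A p_v + \tfrac{D-d^2}{16} Q_A q_v$. By the hypothesis $\tfrac{D-d^2}{16}\in\ZZ$, and $Q_A, q_v$ have integer entries by the notational conventions, so $\tfrac{D-d^2}{16}Q_A q_v\in\ZZ^2$ and contributes nothing to $\fr$. Since $P_A$ is an integer matrix and $p_v\in\ZZ^2$, the vector $\tfrac12 P_A p_v$ lies in $\tfrac12\ZZ^2$, and we conclude $\fr(Av)=\tfrac12 P_A p_v \bmod 1$, entrywise.

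\emph{Non-singularity of $P_A$ modulo $2$.} Here I would compute $\det A$ in the basis $\{1,\rho\}$, writing $\alpha=p_\alpha+q_\alpha\rho$, $\beta=p_\beta+q_\beta\rho\in\O_D$ and $\gamma=p_\gamma+q_\gamma\tfrac\rho2$, $\delta=p_\delta+q_\delta\tfrac\rho2\in\O_{D/4}$, and reducing $\rho^2$ via $\rho^2=\tfrac{D-d^2}{4}-d\rho$. Collecting terms, the rational part of $\det A=\alpha\delta-\beta\gamma$ comes out as $\det P_A + \tfrac{D-d^2}{8}(q_\alpha q_\delta-q_\beta q_\gamma)$, while the coefficient of $\rho$ must vanish because $\det A=1\in\QQ$. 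Comparing rational parts gives $\det P_A = 1 - \tfrac{D-d^2}{8}(q_\alpha q_\delta-q_\beta q_\gamma)$; since $\tfrac{D-d^2}{8}=2\cdot\tfrac{D-d^2}{16}$ is an even integer and the $q$'s are integers, the subtracted term is even, so $\det P_A$ is odd and $P_A$ is invertible over $\FF_2$.

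\emph{Expected obstacle.} I do not expect a genuine difficulty: both statements are a careful extraction of rational parts, first from \eqref{eq:Av_H(4)} and then from the relation $\det A=1$. The only point needing care is the bookkeeping, since the entries of $A$ do not all lie in the same order ($\alpha,\beta\in\O_D=\ZZ[\rho]$ but $\gamma,\delta\in\O_{D/4}=\ZZ[\rho/2]$) and the two coordinates of $v$ are scaled differently, so one must identify the rational and $\rho$-components correctly before applying $\fr$ or reading off $\det A$. The congruence $16\mid D-d^2$ is the one substantive input, and it is used twice: once to absorb $\tfrac{D-d^2}{16}Q_A q_v$ into $\ZZ^2$, and once to force $\det P_A$ to be odd.
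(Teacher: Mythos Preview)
Your proposal is correct and follows essentially the same approach as the paper: both parts extract the rational part of the relevant expression (first \eqref{eq:Av_H(4)}, then $\det A=1$) and use that $\tfrac{D-d^2}{16}\in\ZZ$. The only cosmetic difference is that the paper packages the determinant computation via $A=P_A+Q_A\tfrac{\rho}{2}$ and invokes $\det Q_A\equiv 0\pmod 2$, whereas you expand $\alpha\delta-\beta\gamma$ directly and invoke the equivalent fact that $\tfrac{D-d^2}{8}$ is even; the resulting identity $\det P_A+\tfrac{D-d^2}{8}(q_\alpha q_\delta-q_\beta q_\gamma)=1$ is the same either way.
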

    
    \begin{proof}
        By \cref{eq:Av_H(4)}, the rational part of $A v$ is
        \[
            \frac{1}{2} P_A p_v + \frac{D - d^2}{16} Q_A q_v
        \]
        Since $\frac{D - d^2}{16} \in \ZZ$, we get $\fr(A v) = \frac{1}{2} P_A p_v \bmod 1$.
        
        On the other hand,
        \[
            1 = \det(A) = \det\left(P_A + Q_A \frac{\rho}{2}\right) = \det(P_A) + \det(Q_A) \frac{\rho^2}{4} + \begin{vmatrix}
                p_\alpha & p_\beta \\ q_\gamma & q_\delta
            \end{vmatrix} \frac{\rho}{2} + \begin{vmatrix}
                q_\alpha & q_\beta \\ p_\gamma & p_\delta
            \end{vmatrix} \rho.
        \]
        Taking rational parts in the previous equality yields $\det(P_A) + \frac{D - d^2}{16} \det(Q_A) = 1$.
        Thus, $\det(P_A) \modn{2} 1$ since $\det(Q_A) \modn{2} 0$.
    \end{proof}

    The previous lemmas show that, when $D \modn{16} d^2$, the action of $\SL(\modelZ_D(e))$ on Prym fixed points behaves as in the arithmetic case treated in \Cref{sec:arithmetic}.
    
    A straightforward computation shows that the fractional part of the rational part of the vectors $v_i \in \O_{D/4} \times \frac{1}{2}\O_D$ on the surface $\modelZ_D(e)$, for $e \in S_D$, are
    \[
        \fr(v_1) = (0, 1/2), \quad
        \fr(v_2) = (0, 1/2)
    	\quad \text{ and } \quad
    	\fr(v_3) = (0, 0),
    \]
    when $D$ is an even quadratic residue modulo~$16$.
    See \Cref{fig:Z_H(4)}.
    It follows that
    \[
        G(\modelZ_D(e)) \leqslant \Sym(\{1, 2\}).
    \]
    
    Thus, we get the following result, analogous to \Cref{prop:arith-d2_H(4)} in the arithmetic case.
    
    \begin{cor}\label{cor:upper-bounds_H(4)}
    Let $D \geq 20$ be an even quadratic residue modulo $16$ and $X \in E_D(4)$. Then $G(X)$ is conjugate to $\Sym_2$. Moreover, $G(X)$ is generated by the action of a single parabolic element in $\SL(X)$.
    \qed
    \end{cor}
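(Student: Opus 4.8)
The statement is essentially a packaging of results already in hand: the upper bound $G(\modelZ_D(e)) \leqslant \Sym(\{1, 2\})$ just derived, the transposition supplied by \Cref{prop:transposition}, and the connectedness of $E_D(4)$ in the even case. So the plan is one of assembly rather than of new computation.

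First I would check that there is a prototypical surface to work with, that is, that $S_D \neq \emptyset$. Since $D \geq 20$ we have $D \neq 8$, so \Cref{thm:no-modelA:D=8} guarantees a model-$\modelA^{\pm}$ decomposition and hence a reduced prototype $e \in S_D$, exactly as in the proof of \Cref{prop:transposition}. Fix such an $e$ and pass to the re-scaled prototypical surface $\modelZ_D(e) \in E_D(4)$, on which the period vectors of the regular Prym fixed points lie, modulo $\O_D \times \O_{D/4}$, in $\O_{D/4} \times \frac{1}{2}\O_D$, so that the fractional-part invariant $\fr(\cdot)$ is defined on them.

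Second I would invoke the computation recorded just before the statement, namely $\fr(v_1) = \fr(v_2) = (0, 1/2)$ and $\fr(v_3) = (0, 0)$, together with the lemma giving $\fr(A v) = \frac{1}{2} P_A p_v \bmod 1$ with $P_A$ invertible modulo $2$ for every $A \in \SL(\modelZ_D(e))$. These say that $w_3$ is characterised among the three regular Prym fixed points by having trivial fractional-part vector, and that this characterisation is preserved by every affine transformation; hence no element of $\Aff(\modelZ_D(e))$ can move $w_3$, giving $G(\modelZ_D(e)) \leqslant \Sym(\{1, 2\})$. Combined with \Cref{prop:transposition}, which produces a transposition in $G(\modelZ_D(e))$ realised by a parabolic element of $\SL(\modelZ_D(e))$ --- concretely, the horizontal twist inherited from $\modelA^-_D(e)$ through the diagonal rescaling, which remains parabolic --- this forces $G(\modelZ_D(e)) = \Sym(\{1, 2\}) \cong \Sym_2$, generated by that single parabolic element.

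Finally, since $D$ is even, \Cref{thm:Prym-classification} tells us that $E_D(4)$ is connected, hence a single $\GL^+(2, \RR)$-orbit containing $\modelZ_D(e)$; because the conjugacy class of $G(\cdot)$ is constant along a $\GL^+(2, \RR)$-orbit (as used repeatedly above), the conclusion transfers to every $X \in E_D(4)$. I do not anticipate a genuine obstacle here; the only points requiring a moment's care are the non-emptiness of $S_D$ (handled above) and the observation that, once $G(\modelZ_D(e)) \leqslant \Sym(\{1, 2\})$ is known, the transposition of \Cref{prop:transposition} is automatically $(1\;2)$ and not some permutation the inclusion would forbid.
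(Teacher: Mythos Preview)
Your proof is correct and follows exactly the route the paper intends: the \qed in the paper is shorthand for precisely the assembly you carry out, namely the upper bound $G(\modelZ_D(e)) \leqslant \Sym(\{1,2\})$ from the fractional-part invariant, the transposition from \Cref{prop:transposition}, and the connectedness of $E_D(4)$ for even $D$ from \Cref{thm:Prym-classification}. The only difference is that you make explicit the non-emptiness of $S_D$ and the observation that the transposition must be $(1\;2)$, both of which the paper leaves implicit.
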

    
    \section{Weierstrass Prym eigenforms in genus three of small discriminant}
    \label{sec:remaining}
        In this section, we address the remaining cases not covered by our previous arguments.
        Namely, we deal with the cases where $D \in \{8, 17, 25\}$.
        
    \subsection{Discriminant 8}  
        \label{sec:D=8_H(4)}
        By \Cref{thm:no-modelA:D=8}, there are no model-A cusps on $E_8(4)$. However, we have the representative $S = \mathrm{B}_8(0) \in E_8(4)$, with the corresponding Prym fixed points as shown in \Cref{fig:8}, as is shown explicitly by Lanneau--Nguyen \cite[Proposition~4.7 and Figure~6]{Lanneau-Nguyen:H4}.
    \begin{figure}[t!]
        \centering
        \includegraphics[scale=0.8]{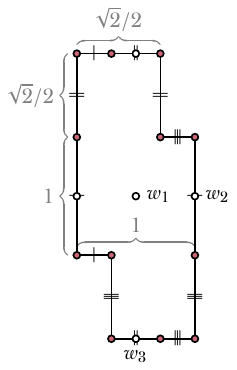}
        \caption{The surface $\mathrm{B}_8(0) \in E_8(4)$ and its Prym fixed points.}
        \label{fig:8}
    \end{figure}
        
    Since the moduli of all the horizontal cylinders in $S$ are $1$, we get that $(1\;2) \in G(S)$. Similarly, the moduli of all the vertical cylinders are $1$ as well, and we get $(1\;3) \in G(S)$.
    
    Thus, $G(S) = \Sym_3$ and, since $E_8(4)$ is connected, we get that $G(X) = \Sym_3$ for every $X \in E_8(4)$.
    \qed
    
    \subsection{Discriminant 17}
        \label{sec:D=17_H(4)} We now assume that $D = 17$, so $S_{17} = \{-3,-1\}$.
        If we take $e = -3$, we get that $r = \frac{D-e^2}{8} = 1$ is odd and, by \Cref{rem:Sym3}, we conclude that $G(\modelA^+_{17}(-3)) = \Sym_3$.
        
        On the other hand, consider the re-scaled prototypical surface $\modelZ_{17}(-3)$ in \Cref{fig:17}. Recall that this surface lies on the same component as $\modelA^-_{17}(-3)$ (since it is obtained by re-scaling this latter surface). It is then enough to exhibit an explicit parabolic element exchanging $w_2$ and $w_3$, since, by \Cref{lem:horizontal_vertical_H(4)}, the horizontal twist exchanges $w_1$ and $w_2$.
        \begin{figure}[t!]
            \centering
            \includegraphics[scale=0.8]{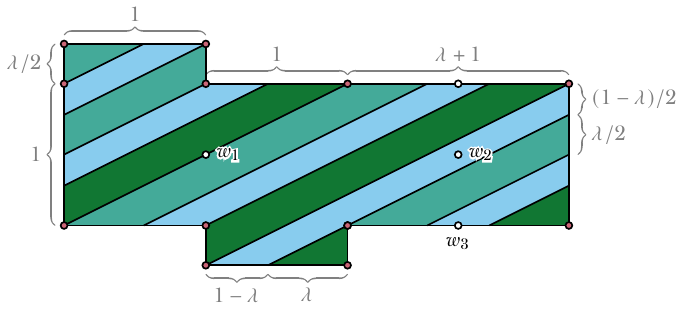}
            \caption{Re-scaled prototypical surface $\modelZ_{17}(-3)$ and the cylinder decomposition induced by the direction $\arctan(1/2)$.}
            \label{fig:17}
        \end{figure}
        
        The cylinder decomposition along $\theta = \arctan(1/2)$ (or along the direction $(2, 1)$) of $\modelZ_{17}(-3)$ is shown in \Cref{fig:17}. As explained in~\cite[Section~6.2]{Gutierrez-Romo--Pardo:H2}, it is enough to compute the ratio between the \emph{horizontal circumference} $w$ of a cylinder and its \emph{horizontal height} $h$. Let $C_1$ be the ``narrow'' cylinder containing $w_2$ and $w_3$ in its core curve, and let $C_2$ be any of the two other cylinders (since they have equal moduli). 
        
        We have that the modulus of the cylinder $C_1$ is
        \[
            m_1 = \frac{(\lambda+1) + 1 + 2 + \lambda + (2-\lambda) + \lambda}{1-\lambda} = \frac{2(\lambda + 3)}{1 - \lambda}
        \]
        and that the modulus of the cylinder $C_2$ is
        \[
            m_2 = \frac{(\lambda+1) + 1 + 2}{\lambda} = \frac{\lambda + 4}{\lambda}.
        \]
        Thus, the ratio of the moduli is
        \[
            \frac{m_1}{m_2} = \frac{2 \lambda(\lambda + 3)}{(1 - \lambda)(\lambda + 4)}.
        \]
        Since $\lambda = \frac{\sqrt{17} - 3}{2}$, a straightforward computation then shows that $\frac{m_1}{m_2} = 2$, so  we conclude that $G(\modelZ_{17}(-3)) = \Sym_3$.
        \qed
        
   \subsection{Discriminant 25} \label{sec:D=25_H(4)}
        \begin{figure}[t!]
            \centering
            \includegraphics[scale=0.8]{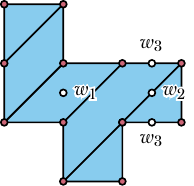}
            \caption{For $D = 25$, $S = \modelA^-_{25}(-1) \in E_{25}(4)$ is a square-tiled surface and the diagonal direction gives the permutation $(1\;3) \in G(S)$.
            }
            \label{fig:25}
        \end{figure}
        We start by fixing $D = 25$, so $S_{25} = \{-3,-1\}$.
        Taking $e = -3$, we get that $r = \frac{D-(e+4)^2}{8} = 3$ is odd. Thus, by \Cref{rem:Sym3}, $G(\modelA^-_{25}(-3)) = \Sym_3$.
        
        On the other hand, $S = \modelA^-_{25}(-1)$ is the primitive square-tiled surface in \Cref{fig:25}.
        By \Cref{lem:horizontal_vertical_H(4)}, we have $(1\;2) \in G(S)$.
        On the other hand, the diagonal direction (with angle $\pi/4$) on $S$ gives a one-cylinder decomposition whose core curve passes through $w_1$ and $w_3$.
        It follows that $(1\;3) \in G(S)$. Thus, $G(S) = \Sym_3$.
        
        Finally, by \Cref{lem:spin-Prym}, we know that $\modelA^-_{25}(-1)$ and $\modelA^-_{25}(-3)$ lie on different components, showing that $G(X) = \Sym_3$ for every $X \in E_{25}(4)$.
        \qed
    
    \section{Classification of permutations in genus three}
    \label{sec:proof} We now summarise the results that give the proof of \Cref{thm:main_H(4)}.
    
    \begin{proof}[Proof of \Cref{thm:main_H(4)}]
        When $D > 8$ is odd or a quadratic nonresidue modulo $16$,
        \Cref{lem:D=8 mod 16_H(4),lem:D=12 mod 16_H(4),lem:D=1 mod 8_H(4)} show that $G(X) = \Sym_3$, for every $X \in E_D(4)$.
        
        When $D > 0$ is an even square, \Cref{prop:arith-d2_H(4)} shows that $G(X) \cong \Sym_2$, for every $X \in E_D(4)$.
        In the non-arithmetic case, when $D$ is an even quadratic residue modulo $16$, \Cref{cor:upper-bounds_H(4)} shows that $G(X) \cong \Sym_2$, for every $X \in E_D(4)$.
        
        The exceptional cases are treated separately: $D = 8$ is treated in \Cref{sec:D=8_H(4)}, $D = 17$ is treated in \Cref{sec:D=17_H(4)}, and $D = 25$ is treated in \Cref{sec:D=25_H(4)}. Thus, $G(X) = \Sym_3$ for every $X \in E_D(4)$, for $D \in \{8,17,25\}$.
    \end{proof}

    \appendix
    
    \section{One-cylinder and two-cylinder decompositions}
    \label{app:cylinders}
    
    In this appendix, we extend the near-complete classification of one-cylinder and two-cylinder decompositions for Weierstrass Prym eigenforms in genus three by Lanneau--Nguyen \cite[Appendix~4]{Lanneau-Nguyen:H4}. Indeed, they show that these decompositions can only arise in the arithmetic case (by referencing a classical result of Thurston \cite{Thurston:diffeomorphisms}), and construct suitable prototypes to classify all possible one-cylinder and two-cylinder cusps. To complete this classification, we use the HLK-invariant to show that the surfaces having one-cylinder and two-cylinder decompositions lie in different components of $E_D(4)$ when $D \modn{8} 1$. Moreover, we give alternative elementary proofs to some already-known facts.
    
    Given an arithmetic Weierstrass Prym eigenforms in genus three, the HLK-invariant consists on the number of integral regular Prym fixed points, together with the (unordered) list of number of regular Prym fixed points of \emph{type} $h$, $v$ and $c$: points projecting to $1/2$, $i/2$ and $(1+i)/2$ on the unit torus, respectively. We will use square brackets to denote unordered lists.
    
    We will show the following:
    
    \begin{figure}[t!]
        \centering
        \begin{subfigure}{\textwidth}
        \centering
            \includegraphics[scale=0.8]{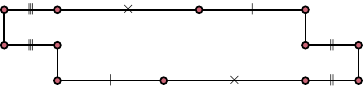}
            \caption{Model C.}
            \label{fig:modelC}
        \end{subfigure}
        \begin{subfigure}{\textwidth}
            \centering
            \includegraphics[scale=0.8]{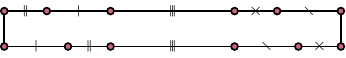}
            \caption{Model D.}
            \label{fig:modelD}
        \end{subfigure}
        \begin{subfigure}{\textwidth}
            \centering
            \includegraphics[scale=0.8]{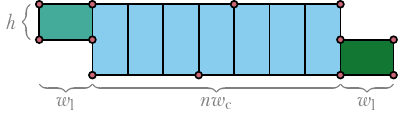}
            \caption{Vertical cylinders in model C.}
            \label{fig:modelC_cylinders}
        \end{subfigure}
        \begin{subfigure}{\textwidth}
            \centering
            \includegraphics[scale=0.8]{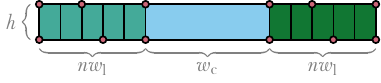}
            \caption{Vertical cylinders in model D.}
            \label{fig:modelD_cylinders}
        \end{subfigure}
        \caption{One-cylinder and two-cylinder models.}
        \label{fig:modelCD}
    \end{figure}
    
    \begin{thm}
    \label{thm:1-2-cyl}
        Let $X$ be a Weierstrass Prym eigenforms in genus three of discriminant $D > 8$. If $X$ admits a one-cylinder or two-cylinder decomposition, then the Teichmüller curve induced by $X$ is arithmetic. Assume now that this is the case, so $D = d^2$ for some integer $d > 4$. We have that:
        \begin{itemize}
            \item if $d$ is even, then $X$ admits both one-cylinder and two-cylinder decompositions;
            \item if $d$ is odd, we have two cases:
            \begin{itemize}
                \item if the HLK-invariant of $X$ is $(3,[0,0,0])$, then $X$ admits a two-cylinder decomposition and no one-cylinder decompositions; and
                \item if the HLK-invariant of $X$ is $(0,[1,1,1])$, then $X$ admits a one-cylinder decomposition and no two-cylinder decompositions.
            \end{itemize}
        \end{itemize}
    \end{thm}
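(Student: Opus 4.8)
The plan is to prove the three assertions separately, in each case reducing to explicit square-tiled representatives, with the Lanneau--Nguyen models C and D of \Cref{fig:modelCD} playing the central role. The tool for arithmeticity is the \emph{holonomy module} $\Lambda = \mathrm{hol}(H_1(X;\ZZ)) \subseteq \RR^2$ of absolute periods: for $f \in \Aff(X)$ one has $Df \cdot \Lambda = \Lambda$, so $\SL(X)$ acts on $\RR^2$ preserving the finitely generated subgroup $\Lambda$, and — being a lattice — it contains some $\Big(\begin{smallmatrix} a & b \\ c & d\end{smallmatrix}\Big)$ with $c \neq 0$. Now if $X$ carries a one-cylinder decomposition, normalize its direction to be horizontal: the ``height modulo the cylinder height $h$'' function is globally defined, yielding a map $X \to \RR/h\ZZ$, so $\mathrm{Im}(\Lambda) = h\ZZ$; applying the matrix above to the horizontal saddle connections and to one transverse class forces every real period to lie in $\tfrac{h}{c}\ZZ$, so after rescaling $h = 1$ the module $\Lambda$ is a rank-two lattice, $X$ is a torus cover, and hence arithmetic. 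For a two-cylinder decomposition the same conclusion follows once the two cylinders are known to have equal heights, and this is where genus three enters: the Prym involution preserves the direction, hence either swaps the two cylinders or fixes each of them; if it fixed each cylinder it would fix the two interior center points of each, producing at least four regular fixed points, contradicting that there are only three; so it swaps the cylinders, which are therefore isometric, $\mathrm{Im}(\Lambda)$ is again cyclic, and we conclude as before. (In genus two the analogous count leaves room for one fixed point on the boundary, which is exactly why non-arithmetic surfaces in $\H(2)$ can admit two-cylinder decompositions.) This is the elementary replacement for Lanneau--Nguyen's appeal to \cite{Thurston:diffeomorphisms}.

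Next I would settle the even case. For $D = d^2$ with $d > 4$ even we have $D \modn{8} 0$ or $4$, so $E_{d^2}(4)$ is connected by \Cref{thm:Prym-classification}, i.e.\ a single $\GL^+(2,\RR)$-orbit; and ``admitting a $k$-cylinder decomposition'' is a $\GL^+(2,\RR)$-orbit invariant (a matrix sends a $k$-cylinder decomposition in direction $\theta$ to one in direction $g\cdot\theta$). Lanneau--Nguyen \cite[Appendix~4]{Lanneau-Nguyen:H4} exhibit, for every such $d$, both a model-C surface (one-cylinder) and a model-D surface (two-cylinder) lying in $E_{d^2}(4)$; since the locus is a single orbit, every $X \in E_{d^2}(4)$ is equivalent to both, hence admits both types of decomposition.

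The odd case is the substantial one. For $d > 4$ odd, $D = d^2 \modn{8} 1$, so $E_{d^2}(4)$ has exactly two connected components by \Cref{thm:Prym-classification}; the task is to show that the one-cylinder and two-cylinder surfaces fall into different components. To this end I would compute the HLK-invariant, as in \Cref{sec:arithmetic}, of an explicit model-C surface and of an explicit model-D surface: using that the Prym involution acts on each cylinder as its central symmetry — so the regular fixed points sit either at cylinder centers or at midpoints of flipped horizontal saddle connections — a direct reading of the pictures gives HLK-invariant $(3,[0,0,0])$ for model~C (all three points project to $0 \in \CC/\ZZ[i]$) and $(0,[1,1,1])$ for model~D (the three points project to the three distinct $2$-torsion points). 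Since the HLK-invariant is constant along $\GL^+(2,\RR)$-orbits (affine maps commute with the normalized torus covering, and the induced automorphism of the base lies in $\GL(2,\ZZ)$, which merely permutes the three $2$-torsion points and so leaves the unordered type-list unchanged), and since $(3,[0,0,0]) \neq (0,[1,1,1])$, the model-C and model-D surfaces lie in the two distinct components; consequently the HLK-invariant of every $X \in E_{d^2}(4)$ is one of these two values. Combining with Lanneau--Nguyen's classification — any surface with a one-cylinder (resp.\ two-cylinder) decomposition is $\GL^+(2,\RR)$-equivalent to model~C (resp.\ model~D) — the two components are exactly distinguished by which type of decomposition is present, which is the assertion.

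I expect the main obstacle to be the HLK-invariant computation on the explicit model-C and model-D families: one must locate the three regular Prym fixed points precisely (via the cylinder central symmetries) and track their images under the normalized covering $X \to \CC/\ZZ[i]$, while checking that the normalization choices (branch point, basis of $\ZZ[i]$) do not alter the unordered type-list. A secondary technical point is making the two-cylinder arithmeticity step airtight — verifying that the two cylinders being interchanged by the Prym involution forces them to be isometric, not merely of equal modulus — and confirming, for the even case, that Lanneau--Nguyen's models C and D genuinely both occur inside $E_{d^2}(4)$ for every even $d > 4$.
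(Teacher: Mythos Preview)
You have the model labels reversed: in the paper (and in Lanneau--Nguyen), model~C is the \emph{two}-cylinder decomposition and model~D is the \emph{one}-cylinder one, not the other way around. Since you correctly read off HLK-invariant $(3,[0,0,0])$ for the surface labelled~C and $(0,[1,1,1])$ for the surface labelled~D, your swap would make you conclude one-cylinder $\leftrightarrow (3,[0,0,0])$ and two-cylinder $\leftrightarrow (0,[1,1,1])$, the opposite of the theorem. This is easily fixed once you look at the figures carefully, but as written the proposal proves the wrong statement.

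Setting that aside, your arithmeticity argument is genuinely different from the paper's and correct. The paper stays inside each model and analyses the \emph{vertical} cylinder decomposition: it shows there are exactly three vertical cylinders and that commensurability of their moduli forces the horizontal width ratio to be rational. Your holonomy-module argument is more abstract; it handles the one-cylinder case as a general fact about Veech surfaces (the paper records this in a remark after the proof, with yet another argument), and for the two-cylinder case your ``Prym must swap the two cylinders, else four regular fixed points'' step is exactly the observation the paper uses---though only later, as the odd-$d$ obstruction.

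For odd $d$ there is a residual gap in your plan. You compute the HLK-invariant of \emph{one} explicit model-C surface and \emph{one} explicit model-D surface, but Lanneau--Nguyen's classification only says that any one-cylinder (resp.\ two-cylinder) decomposition is of model-D (resp.\ model-C) \emph{type}; each type is a family of prototypes, so you still owe the step that \emph{every} such surface has the expected HLK. The paper avoids HLK-invariance here and argues directly: any primitive one-cylinder surface has cylinder height~$1$, so its two interior Prym fixed points sit at half-integer height and cannot be integral, ruling out HLK $(3,[0,0,0])$; and any two-cylinder surface has its cylinders swapped by the Prym involution, hence an even number of squares, which is incompatible with the $(0,[1,1,1])$ component (where $n=d$ is odd). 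Your approach can be repaired with the first of these observations---which you essentially already have---together with the fact that only two HLK-values occur among the two components.
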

    
    \begin{proof}
        We start by proving that, if that $X$ admits a two-cylinder decomposition, then the Teichmüller curve induced by $X$ is arithmetic. By the classification of cylinder decompositions by Lanneau--Nguyen \cite[Proposition 3.2]{Lanneau-Nguyen:H4}, we can assume that $X$ is a model-C surface, as shown in \Cref{fig:modelC}, with at most three vertical cylinders. Observe that the surface has exactly three vertical cylinders: two lateral cylinders of equal moduli, and one central cylinder. Indeed, the two lateral ``prongs'' of the surface already induce two distinct cylinders, so the rest of the surface must consist of a third central cylinder (as the maximal number of cylinders is three).
        
        Now, let $w_{\mathrm{l}} > 0$ be the width of lateral cylinders, let $w_{\mathrm{c}} > 0$ be the width of the central cylinder, and let $h > 0$ be the height of the lateral cylinders, as in \Cref{fig:modelC_cylinders}. We have that the moduli of the lateral cylinders are $m_{\mathrm{l}} = \frac{w_{\mathrm{l}}}{h}$, and that the modulus of the central cylinder is $m_{\mathrm{c}} = \frac{w_{\mathrm{c}}}{2nh}$, for some integer $n$. Since $X$ is Veech, these moduli are commensurable, so $\frac{m_{\mathrm{l}}}{m_{\mathrm{c}}} = \frac{2n w_{\mathrm{l}}}{w_{\mathrm{c}}} \in \QQ$. We obtain that $\frac{w_{\mathrm{l}}}{w_{\mathrm{c}}} \in \QQ$, so the Teichmüller curve induced by $X$ is arithmetic.
        
        Assume now that $X$ admits a one-cylinder decomposition. We will show that the Teichmüller curve induced by $X$ is arithmetic. Again, by the classification of cylinder decompositions by Lanneau--Nguyen \cite[Proposition 3.2]{Lanneau-Nguyen:H4}, we can assume that $X$ is a model-D surface, as shown in \Cref{fig:modelD}, with at most three vertical cylinders. We claim that this surface has exactly three vertical cylinders: two lateral cylinders of equal moduli, and one central cylinder. Indeed, the way the sides are identified means that the portions of the surface to the left and right of the central cylinder cannot be joined by vertical trajectories. Each of these portions must consist of a single vertical cylinder, as the maximal number of cylinders is three.
        
        Now, let $w_{\mathrm{l}} > 0$ be the width of lateral cylinders, let $w_{\mathrm{c}} > 0$ be the width of the central cylinder, and let $h > 0$ be the height of the central cylinder, as in \Cref{fig:modelD_cylinders}. We have that the moduli of the lateral cylinders is $m_{\mathrm{l}} = \frac{w_{\mathrm{l}}}{nh}$, where $n$ is an integer, and that the modulus of the central cylinder is $m_{\mathrm{c}} = \frac{w_{\mathrm{c}}}{h}$. Since $X$ is Veech, these moduli are commensurable, so $\frac{m_{\mathrm{l}}}{m_{\mathrm{c}}} = \frac{w_{\mathrm{l}}}{nw_{\mathrm{n}}} \in \QQ$. We obtain that $\frac{w_{\mathrm{l}}}{w_{\mathrm{c}}} \in \QQ$, so the Teichmüller curve induced by $X$ is arithmetic.
        
        \begin{figure}
            \centering
            \begin{subfigure}{\textwidth}
                \centering
                \includegraphics[scale=0.8]{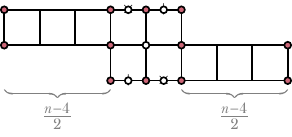}
                \caption{Two-cylinder decomposition with HLK-invariant equal to $(1, [2, 0, 0])$.}
            \end{subfigure}
            \begin{subfigure}{\textwidth}
                \centering
                \includegraphics[scale=0.8]{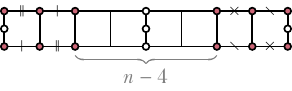}
                \caption{One-cylinder decomposition with HLK-invariant equal to $(1, [2, 0, 0])$.}
            \end{subfigure}
            \caption{Realisability of models C and D for even $d$. The Weierstrass points are marked.}
            \label{fig:modelCD_even}
        \end{figure}
        
        \begin{figure}
            \centering
            \begin{subfigure}{\textwidth}
                \centering
                \includegraphics[scale=0.8]{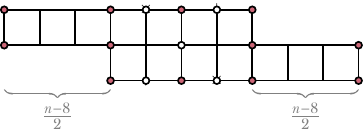}
                \caption{Two-cylinder decomposition with HLK-invariant equal to $(3, [0, 0, 0])$.}
                \label{fig:modelC_odd}
            \end{subfigure}
            \begin{subfigure}{\textwidth}
                \centering
                \includegraphics[scale=0.8]{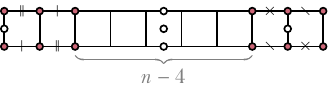}
                \caption{One-cylinder decomposition with HLK-invariant equal to $(0, [1, 1, 1])$.}
                \label{fig:modelD_odd}
            \end{subfigure}
            \caption{Realisability of models C and D for odd $d$. The Weierstrass points are marked.}
        \end{figure}
        
        Assume now that $D = d^2$ for some integer $d > 4$. We will assume then that $X$ is a primitive square-tiled surface, tiled by $n$ unit squares. If $d$ is even, then it is possible to find such surfaces realising both models C and D, as shown in \Cref{fig:modelCD_even}.
        
        We now continue with the case where $d$ is odd and where the HLK-invariant of $X$ is $(3, [0,0,0])$; we have that $n = 2d$. Assume by contradiction that $X$ realises model D. Since $X$ is primitive, the width of the single horizontal cylinder is $1$. This implies that there exist non-integral Weierstrass points, which contradicts the fact that the HLK-invariant of $X$ is $(3, [0,0,0])$. Thus, $X$ does not admit any one-cylinder decomposition. On the other hand, it is possible to find a surface realising model C, as shown in \Cref{fig:modelC_odd}, and thus, $X$ do admit two-cylinder decompositions.
        
        Finally, assume that $d$ is odd and that the HLK-invariant of $X$ is $(0, [1,1,1])$; then $n = d$, so $n$ is odd. Since the two cylinders in model C must be exchanged by the Prym involution and $X$ is primitive, we deduce that this is only possible if $n$ is even. Thus, $X$ does not admit two-cylinder decompositions. On the other hand, it is possible to find a surface realising model D, as shown in \Cref{fig:modelD_odd}, and thus, $X$ do admit one-cylinder decompositions.
    \end{proof}
        
    \begin{rem}
        In the previous proof, we showed an \emph{ad hoc} way to establish that a Weierstrass Prym eigenforms in genus three admitting a one-cylinder decomposition induces an arithmetic Teichmüller curve. Nevertheless, this is a general fact about Veech surfaces.
        Indeed, assume that $X$ is a Veech surface admitting a one-cylinder decomposition.
        Without loss of generality, we can assume that the one-cylinder direction is the horizontal direction, and that the vertical direction is completely periodic. The vertical direction then induces a cylinder decomposition $\{C_i\}_{i \in I}$. For each $i \in I$, let $w_i > 0$ be the width of the cylinder $C_i$, and let $h_i > 0$ be its height. Moreover, let $h > 0$ be the height of the horizontal cylinder. We have that $h_i = n_i h$ for some integer $n_i$, since each vertical cylinder must wind around the height of the vertical cylinder an integral amount of times ($n_i$ is the intersection number of the core curves of the horizontal cylinder and $C_i$).
        Thus, the modulus of $C_i$ is $m_i = \frac{w_i}{h_i} = \frac{w_i}{n_i h}$. Finally, since $X$ is Veech, we have that $\frac{m_i}{m_j} = \frac{w_i}{w_j}\frac{n_j}{n_i} \in \QQ$ for each $i, j \in I$. Therefore, $\frac{w_i}{w_j} \in \QQ$, which shows that the Teichmüller curve induced by $X$ is arithmetic.
        
        At the heart of the previous argument lies the fact that the heights of the cylinders $C_i$ are already commensurable, simply since they all wind around the same horizontal cylinder. Thus, the fact that $X$ is Veech allows us to conclude that their widths are also commensurable.
        Hence, this argument fails when one tries to consider Veech surfaces with more than one horizontal cylinder.
    \end{rem}
    
    \section{Failure of the Kontsevich--Zorich representation to be faithful}
    \label{app:representation}
    
    In this appendix, we show that the representation $\SL(X) \to \Aut(H_1^{(0)})$ is never faithful for a Weierstrass Prym eigenforms in genus three. More precisely, if $X$ is such a surface, recall that the Veech group $\SL(X)$ can be canonically identified with the affine group $\Aff(X)$, since $\Aut(X)$ is trivial. Consider the holonomy map $\mathrm{hol} \colon H_1(X; \ZZ) \to \CC$ and define the \emph{holonomy-free submodule} $H_1^{(0)} \subseteq H_1(X; \ZZ)$ to be the kernel of $\mathrm{hol}$.
    \begin{rem} We caution the reader that the notation $H_1^{(0)}$ is not standard, as it often stands for the kernel of the holonomy map $\mathrm{hol}_\RR \colon H_1(X,\RR) \to \CC \cong \RR^2$ when regarded as a map of $\RR$-vector spaces. With this definition, $H_1^{(0)}$ always has real codimension two. Equivalently, this version of $H_1^{(0)}$ can be described as the symplectic-orthogonal of the tautological plane $H_1^{\mathrm{st}}$, that is, the Poincaré-dual of $\langle \Re(\omega), \Im(\omega)\rangle$. In our context, on the other hand, $H_1^{(0)}$ is a $\ZZ$-module whose rank depends on whether the surface is arithmetic or not (and, more generally, its rank depends on the degree and rank of the corresponding linear invariant suborbifold).
    \end{rem}
    The submodule $H_1^{(0)}$ is invariant for the action of $\SL(X)$. Thus, the Kontsevich--Zorich representation $\varrho \colon \SL(X) \to \Aut(H_1(X; \RR))$ induces a natural subrepresentation $\varrho^{(0)} \colon \SL(X) \to \Aut(H_1^{(0)})$, which is, in general, not known to be faithful. We will show that this representation is never faithful for a Weierstrass Prym eigenforms in genus three.
    
    It was shown by Hooper--Weiss \cite[Theorem 5.6]{Hooper-Weiss:generalized_staircases} (by using some results in an unpublished manuscript of Thurston \cite{Thurston:minimal_stretch}) that the kernel of the action of the Veech group on any invariant submodule of rank two is a Fuchsian group of the first kind (and, in particular, it is not trivial). Moreover, this result can be extended to submodules of higher rank, as shown by the second named author in an unpublished note \cite[Theorem 2]{Pardo:remark} (see also \cite{Crovisier-Hubert-Lanneau-Pardo}), provided that the submodule can be decomposed into invariant blocks of rank two. In other words, the combination of these results shows that:
    \begin{thm}[{\cite{Thurston:minimal_stretch}, \cite[Theorem 5.6]{Hooper-Weiss:generalized_staircases}, \cite[Theorem 2]{Pardo:remark}}]
        \label{thm:two_dimensional}
        Let $X$ be a Veech surface and let $E$ be a submodule of $H_1^{(0)}$ that can be split as $E = E_1 \oplus E_2 \oplus \dotsb \oplus E_n$, where, for each $1 \leq k \leq n$, $E_k$ is an invariant submodule for the action of $\SL(X)$ and has rank two. Then, the kernel of the subepresentation $\SL(X) \to \Aut(E)$ of the Kontsevich--Zorich representation $\varrho \colon \SL(X) \to \Aut(H_1(X; \ZZ))$ is a Fuchsian group of the first kind and, in particular, it is not trivial.
        \qed
    \end{thm}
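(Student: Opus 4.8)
The plan is to reduce the statement to its rank-two instance, which is the substantive input, and then run a short group-theoretic argument on the intersection of kernels. For $n=1$, that is, when $E$ itself has rank two, the claim is exactly Hooper--Weiss \cite[Theorem~5.6]{Hooper-Weiss:generalized_staircases}, which in turn relies on Thurston's unpublished bounds for minimal-stretch maps \cite{Thurston:minimal_stretch}; I would simply quote it. Morally, the point is that a rank-two $\SL(X)$-invariant submodule of $H_1^{(0)}$ is acted on trivially by a suitable power of the affine multitwist in each parabolic direction (Thurston's bounds rule out the competing possibilities coming from pseudo-Anosov dilatations), so that the kernel of $\SL(X)\to\Aut(E)$ accumulates on the whole boundary circle and is therefore a Fuchsian group of the first kind.

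For general $n$, set $N_k = \ker\bigl(\SL(X)\to\Aut(E_k)\bigr)$. Since the decomposition $E = E_1\oplus\dotsb\oplus E_n$ is $\SL(X)$-invariant, the kernel of $\SL(X)\to\Aut(E)$ equals $N = N_1\cap\dotsb\cap N_n$, and each $N_k$ is a normal subgroup of $\SL(X)$ which, by the rank-two case, is Fuchsian of the first kind, hence non-elementary. It therefore suffices to prove two elementary facts about a Fuchsian group $\Gamma$ of the first kind. First, any non-elementary normal subgroup $N\triangleleft\Gamma$ is again of the first kind: its limit set $\Lambda(N)\subseteq\Lambda(\Gamma)=\partial\HH$ is nonempty, closed and $\Gamma$-invariant by normality, hence equals $\Lambda(\Gamma)$ by minimality of the limit set. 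Second, if $N_1,N_2\triangleleft\Gamma$ are both non-elementary, then $N_1\cap N_2$ is non-elementary; granting this, induction finishes the proof. For the second fact, note that $N_1\cap N_2$ is normal in $\Gamma$, so if it were elementary it would be finite and contained in the centre $Z$ of $\Gamma$; then the images of $N_1$ and $N_2$ in $\Gamma/Z$ would intersect trivially and hence commute, so every element of $N_2$ would centralise modulo $Z$ a pair of hyperbolic elements of $N_1$ with distinct axes, forcing $N_2\subseteq Z$ and contradicting non-elementarity. This is the content of \cite[Theorem~2]{Pardo:remark} (see also \cite{Crovisier-Hubert-Lanneau-Pardo}).

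The only genuinely deep ingredient is the rank-two base case, which I would not reprove. Within the reduction itself, the sole point demanding care is the preservation of non-elementarity under intersection: a priori the intersection of two infinite normal subgroups could degenerate into the finite centre, and the commutator/centraliser argument above is precisely what excludes this.
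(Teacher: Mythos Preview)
Your proposal is correct and follows exactly the route the paper indicates: the paper does not prove this theorem but simply records it as the combination of the rank-two case of Hooper--Weiss \cite[Theorem~5.6]{Hooper-Weiss:generalized_staircases} (using Thurston \cite{Thurston:minimal_stretch}) with the extension to direct sums of rank-two blocks from \cite[Theorem~2]{Pardo:remark}, and your write-up is precisely an unpacking of that second step via the intersection-of-kernels argument. The group-theoretic reduction you give (normal non-elementary subgroups of a first-kind Fuchsian group are themselves of the first kind, and the intersection of two such subgroups stays non-elementary by the commutator/centraliser trick) is sound.
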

    
    Using the previous theorem, we can show the main result of this appendix:
    \begin{thm}
        Let $X$ be a Weierstrass Prym eigenforms in genus three. Let
        \[
            \varrho \colon \SL(X) \to \Aut(H_1(X; \ZZ))
        \]
        be the Kontsevich--Zorich representation. Let now $\varrho^{(0)} \colon \SL(X) \to \Aut(H_1^{(0)})$ be the subrepresentation of $\varrho$ induced on $H_1^{(0)}$. Then, $\varrho^{(0)}$ is not faithful.
    \end{thm}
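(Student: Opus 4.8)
The plan is to realise inside $H_1^{(0)}$ a finite-index, $\SL(X)$-invariant submodule that decomposes as a direct sum of $\SL(X)$-invariant submodules of rank two, and then to invoke \Cref{thm:two_dimensional}. Throughout, write $H_1^{\pm} = H_1(X;\ZZ)^{\pm}$ for the $\pm1$-eigenspaces of the Prym involution $\iota$ acting on $H_1(X;\ZZ)$.

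First I would locate $H_1^+$ inside $H_1^{(0)}$. Since $\iota^{*}\omega = -\omega$, for every cycle $c$ one has $\mathrm{hol}(\iota_{*}c) = \int_{\iota_{*}c}\omega = \int_{c}\iota^{*}\omega = -\mathrm{hol}(c)$; hence $\iota$ preserves $H_1^{(0)} = \ker\mathrm{hol}$, and every $\iota$-invariant cycle has vanishing holonomy, so $H_1^+ \subseteq H_1^{(0)}$. By Riemann--Hurwitz, $\iota$ has $10 - 2\cdot 3 = 4$ fixed points, so $X/\iota$ has genus one and $H_1^+$ has $\ZZ$-rank $2$. Since $H_1^{(0)}$ is $\iota$-stable, it splits over $\QQ$ as $H_1^{(0)}\otimes\QQ = (H_1^+\otimes\QQ)\oplus W$, where $W := (H_1^{(0)}\cap H_1^-)\otimes\QQ$ is $\SL(X)$-invariant, being the rationalisation of an intersection of $\SL(X)$-invariant submodules.

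Next I would determine the rank of $H_1^{(0)}$. Using the eigenform relation $\mathrm{hol}(\tau_{*}c) = c_{\tau}\,\mathrm{hol}(c)$ for $\tau\in\O_D$ (with $c_\tau\in\RR$ the eigenvalue of $\tau$ on $\omega$), one sees that $H_1^{(0)}\cap H_1^-$ is an $\O_D$-submodule of $H_1^-$. In the non-arithmetic case $\O_D\otimes\QQ$ is the field $\QQ(\sqrt D)$: the kernel of $\mathrm{hol}$ on $H_1^-\otimes\QQ$ is then an $\O_D$-stable subspace whose complexification is the ghost eigenplane (the eigenspace for the real embedding complementary to the one carrying $\omega$), which is not defined over $\QQ$ because the relevant element of $\O_D$ has irrational eigenvalue; hence $W = 0$, so $H_1^{(0)} = H_1^+$ has rank $2$. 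In the arithmetic case $D = d^2$, so $\O_D\otimes\QQ\cong\QQ\times\QQ$, the idempotent complementary to the one carrying $\omega$ lies in $\ker\mathrm{hol}$, and $\dim_\QQ W = 2$, so $H_1^{(0)}$ has rank $4$; equivalently, for a square-tiled representative the period lattice has rank $2$ and $\mathrm{rank}\,H_1^{(0)} = 6-2 = 4$. (These are the two possibilities alluded to in the remark preceding \Cref{thm:two_dimensional}.) In either case set
\[
    E := H_1^+ \oplus (H_1^{(0)}\cap H_1^-) \subseteq H_1^{(0)},
\]
a finite-index $\SL(X)$-invariant submodule that is a direct sum of one or two $\SL(X)$-invariant submodules of rank two.

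Finally I would apply \Cref{thm:two_dimensional} to $E$: the kernel $N$ of $\SL(X)\to\Aut(E)$ is a Fuchsian group of the first kind, in particular non-trivial. It remains to note that any $\phi\in\Aut(H_1^{(0)})$ restricting to the identity on the finite-index submodule $E$ must itself be the identity, because $\phi\otimes\QQ$ is the identity on $E\otimes\QQ = H_1^{(0)}\otimes\QQ$ and $H_1^{(0)}$ embeds in $H_1^{(0)}\otimes\QQ$; hence $N\subseteq\ker\varrho^{(0)}$, so $\varrho^{(0)}$ is not faithful. The argument is essentially formal once \Cref{thm:two_dimensional} is granted; the only genuine inputs are that theorem and the rank computation of the third step. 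The point that requires care is the arithmetic case: there $H_1^{(0)}$ strictly contains $H_1^+$, so one cannot conclude merely from non-faithfulness of the action on $H_1^+$ and must instead feed the two-block submodule $H_1^+\oplus(H_1^{(0)}\cap H_1^-)$ into \Cref{thm:two_dimensional}.
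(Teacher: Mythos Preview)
Your proposal is correct and follows essentially the same approach as the paper: both split into the arithmetic and non-arithmetic cases, identify $H_1^{+}\subseteq H_1^{(0)}$ as a rank-two invariant summand, set $F = H_1^{(0)}\cap H_1^{-}$ as the complementary rank-two invariant piece in the arithmetic case, and then invoke \Cref{thm:two_dimensional}. The only noteworthy difference is that you are more careful about the integral splitting: rather than asserting $H_1^{(0)} = H_1^{+}\oplus F$ on the nose, you work with the finite-index submodule $E = H_1^{+}\oplus F$ and then observe that an automorphism trivial on $E$ is trivial on $H_1^{(0)}\otimes\QQ$ and hence on $H_1^{(0)}$; this is a genuine (if minor) improvement in rigour over the paper's phrasing.
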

    
    \begin{proof}
        By \Cref{thm:two_dimensional}, it is enough to show that $H_1^{(0)}$ can be decomposed into a direct sum of invariant submodules of rank two. We will do this separately for the arithmetic and non-arithmetic cases.
        
        Consider first the case where the Teichmüller curve induced by $X$ is arithmetic. We have the splitting
        \[
            H_1(X; \RR) = H_1^+ \oplus H_1^-,
        \]
        where $H_1^+$ is invariant submodule for the action of the Prym involution $\iota$, and $H_1^-$ is its anti-invariant submodule. Since $\iota^*(\omega) = -\omega$, the anti-invariant submodule $H_1^-$ has rank four and contains the restriction $H_1^{\mathrm{st}}(\ZZ)$ of the tautological plane to integral homology (namely, $H_1^{\mathrm{st}}(\ZZ) = H_1^{\mathrm{st}} \cap H_1(X; \ZZ)$), which, in turn, has rank two; while the invariant submodule $H_1^+$ has rank two and is contained in $H_1^{(0)}$, which is, in turn, has rank four. In other words, if we set $F = H_1^- \cap H_1^{(0)}$, we have that $F$ is a rank-two invariant submodule satisfying
        \[
            H_1(X; \ZZ) = H_1^{\mathrm{st}}(\ZZ) \oplus F \oplus H_1^+, \quad H_1^- = H_1^{\mathrm{st}}(\ZZ) \oplus F, \quad H_1^{(0)} = F \oplus H_1^+,
        \]
        so we conclude that $\varrho^{(0)}$ is not faithful by \Cref{thm:two_dimensional}.

        On the other hand, if $X$ is non-arithmetic, the holonomy-free submodule $H_1^{(0)}$ has rank two. Indeed, in this case we have that $H_1^-$ is the rank-four submodule induced by the tautological plane $H_1^{\mathrm{st}}$ and its Galois-conjugate $(H_1^{\mathrm{st}})^\sigma$ (namely, $H_1^- = (H_1^{\mathrm{st}} \oplus (H_1^{\mathrm{st}})^\sigma) \cap H_1(X; \ZZ)$) and, therefore, $H_1^{(0)}$ coincides with the rank two subbmodule $H_1^+$.
        Thus, the fact that $\varrho^{(0)}$ is not faithful follows immediately from \Cref{thm:two_dimensional}.
    \end{proof}

\sloppy
\printbibliography

\end{document}